\theoremstyle{plain}
  \newtheorem{theorem}{Theorem}
  \newtheorem{conjecture}{Conjecture}
  \newtheorem{corollary}{Corollary}
  \newtheorem{proposition}{Proposition}
\theoremstyle{definition}
  \newtheorem{example}{Example}
  \newtheorem{remark}{Remark}
\newcommand{\qede}{\hspace*{\fill}$\Diamond$\medskip}
\newcommand{\stack}[2]{\genfrac{}{}{0pt}{}{#1}{#2}}
\newcommand{\op}[1]{\ensuremath{\operatorname{#1}}}
\renewcommand{\Re}{\op{Re}\,}
\renewcommand{\Im}{\op{Im}\,}
\newcommand{\e}{\mathrm{e}}
\newcommand{\G}{\mathrm{G}}
\newcommand{\md}{\mathrm{d}}
\newcommand{\id}{\,\mathrm{d}}
\newcommand{\pFq}[5]{\ensuremath{{}_{#1}F_{#2} \left( \genfrac{}{}{0pt}{}{#3}{#4} \bigg| {#5} \right)}}
\newcommand{\Li}{\op{Li}}
\newcommand{\Ti}[2]{\op{Ti}_{#1}\left( {#2} \right)}
\newcommand{\Gl}[2]{\op{Gl}_{#1}\left( {#2} \right)}
\newcommand{\Cl}[2]{\op{Cl}_{#1}\left( {#2} \right)}
\newcommand{\mgl}[1]{\Gl{#1}{\frac{\pi}{3}}}
\newcommand{\mcl}[1]{\Cl{#1}{\frac{\pi}{3}}}
\newcommand{\Ls}[2]{\op{Ls}_{#1}\left( {#2} \right)}
\newcommand{\LsD}[3]{\op{Ls}_{#1}^{(#2)}\left( {#3} \right)}
\newcommand{\Lsc}[3]{\op{Lsc}_{#1,#2}\left( {#3} \right)}
\title{Log-sine evaluations of Mahler measures, II\\
{\small This paper is dedicated to the memory of John Selfridge}}
\author{David Borwein\thanks{Dept of Mathematics, University of Western Ontario, London, ONT,  N6A 5B7, Canada. Email: \texttt{dborwein@uwo.ca}}
,~ Jonathan M. Borwein\thanks{Centre for Computer-assisted Research Mathematics and its Applications (CARMA), School of Mathematical and Physical Sciences, University of Newcastle, Callaghan, NSW 2308,
Australia. Email:   \texttt{jonathan.borwein@newcastle.edu.au},
\texttt{jborwein@cs.dal.ca}. Supported in part by the Australian
Research Council and the University of Newcastle.},
Armin Straub\thanks{Tulane University, Louisiana, USA. Email: \texttt{astraub@tulane.edu}}
~and James Wan\thanks{CARMA, University of Newcastle. Email: \texttt{james.wan@newcastle.edu.au}}
}
\date{\ttfamily \today}
\begin{document}
\maketitle

\begin{abstract}
  We continue our analysis of higher and multiple Mahler measures using
  log-sine integrals as started in \cite{logsin1, logsin-evaluations}.  This
  motivates a detailed study of various multiple polylogarithms \cite{b3l} and worked examples are given.
 Our techniques enable the reduction of several multiple Mahler measures, and
 supply an easy proof of two conjectures by Boyd.
\end{abstract}

\section{Introduction}\label{sec:intro}

In \cite{logsin1} the classical log-sine integrals and their extensions were
used to develop a variety of results relating to higher and multiple Mahler
measures \cite{boyd,klo}. The utility of this approach was such that we
continue the work herein. Among other things, it allows us to tap into a rich
analytic literature \cite{lewin2}. In \cite{logsin-evaluations} the
computational underpinnings of such studies are illuminated. The use of related
integrals is currently being exploited for multi-zeta value studies
\cite{ona11}.  Such evaluations are also useful for physics \cite{lsjk}:
log-sine integrals appeared for instance  in the calculation of
higher terms in the $\varepsilon$-expansion of various Feynman diagrams
\cite{davydychev-eps, kalmykov-eps}. Of particular importance are the log-sine
integrals at the special values $\pi/3$, $\pi/2$, $2\pi/3$, and $\pi$. The
log-sine integrals also appear in many settings in number theory and analysis:
classes of inverse binomial sums can be expressed in terms of generalized
log-sine integrals \cite{mcv, davydychev-bin}.

The structure of this article is as follows. In Section \ref{sec:prelim} our
basic tools are described. After providing necessary results on log-sine
integrals in Section \ref{sec:logsin}, we turn to relationships between random
walks and Mahler measures in Section \ref{sec:walks}. In particular, we will be
interested in the multiple Mahler measure $\mu_n(1+x+y)$ which has a fine
hypergeometric form \eqref{eq:hyper} and a natural trigonometric
representation \eqref{eq:mu-trigintegral} as a double integral.

In Section \ref{sec:epsilon} we directly expand \eqref{eq:hyper} and use
known results from the $\varepsilon$-expansion of hypergeometric functions
\cite{dk1-eps, davydychev-bin} to obtain $\mu_n(1+x+y)$ in terms of multiple
inverse binomial sums.  In the cases $n=1,2,3$ this leads to explicit
polylogarithmic evaluations.

An alternative approach based of the double integral representation
\eqref{eq:mu-trigintegral} is taken up in Section \ref{sec:mun} which considers
the evaluation of the inner integral in \eqref{eq:mu-trigintegral}.  Aided by
combinatorics, we show in Theorems \ref{thm:mun} and \ref{thm:rhon} that these
can always be expressed in terms of multiple harmonic polylogarithms of weight
$k$. Accordingly, we show in Section \ref{sec:lireductions} how these
polylogarithms can be reduced explicitly for low weights. In Section
\ref{sec:mu2} we reprise from \cite{logsin1} the evaluation of $\mu_2(1+x+y)$.
Then in Section \ref{sec:mu3} we apply our general results from Section
\ref{sec:mun} to a conjectural evaluation of $\mu_3(1+x+y)$.

In Section \ref{sec:boyd} we finish with an elementary proof of two recently
established 1998 conjectures of Boyd and use these tools to obtain a new Mahler
measure.

\section{Preliminaries}\label{sec:prelim}

For $k$ functions (typically Laurent polynomials) in $n$ variables the
\emph{multiple Mahler measure}, introduced in \cite{klo}, is defined by
\begin{equation*}
  \mu(P_1,P_2, \ldots, P_k)
  := \int_0^1 \cdots \int_0^1 \prod_{j=1}^k \log \left| P_j\left(e^{2\pi i t_1},
    \ldots, e^{2\pi i t_n}\right)\right| \md t_1 \md t_2 \ldots \md t_n.
\end{equation*}
When $P=P_1=P_2= \cdots =P_k$ this devolves to a \emph{higher Mahler measure},
$\mu_k(P)$, as introduced and examined in \cite{klo}. When $k=1$ both reduce
to the standard (logarithmic) \emph{Mahler measure} \cite{boyd}.

We also recall \emph{Jensen's formula}:
\begin{equation}\label{eq:sl}
  \int_0^1 \log \left|\alpha-e^{2\pi i\,t}\right|\id t
  = \log \left(\max\{|\alpha|,1\}\right).
\end{equation}

An easy consequence of Jensen's formula is that for complex constants $a$ and
$b$ we have
\begin{equation}\label{eq:mlin}
  \mu(ax+b) = \log|a|\vee\log |b|.
\end{equation}

In the following development,
\begin{equation*}
  \Li_{a_1,\ldots,a_k}(z)
  := \sum_{n_1>\cdots>n_k>0} \frac{z^{n_1}}{n_1^{a_1} \cdots n_k^{a_k}}
\end{equation*}
denotes the \emph{generalized polylogarithm} as is studied in \cite{mcv} and in
\cite[Ch. 3]{bbg}.  For example, $\Li_{2,1}(z) =\sum_{k=1}^\infty
\frac{z^k}{k^2}\sum_{j=1}^{k-1} \frac1j$.  In particular, $\Li_k(x) :=
\sum_{n=1}^\infty \frac{x^n}{n^k}$ is the \emph{polylogarithm of order $k$} and
\begin{equation*}
  \Ti{k}{x} := \sum_{n=0}^\infty \,(-1)^n\frac{ x^{2n+1}}{(2n+1)^{k}}
\end{equation*}
is the related \emph{inverse tangent of order $k$}. We use the same notation for
the analytic continuations of these functions.

Moreover, \emph{multiple zeta values} are denoted by
\begin{equation*}
  \zeta(a_1,\ldots,a_k) := \Li_{a_1,\ldots,a_k}(1).
\end{equation*}
Similarly, we consider the \emph{multiple Clausen functions} ($\op{Cl}$) and
\emph{multiple Glaisher functions} ($\op{Gl}$) of depth $k$ defined as
\begin{align}
  \Cl{a_1,\ldots,a_k}{\theta} &= \left\{ \begin{array}{ll}
    \Im \Li_{a_1,\ldots,a_k}(e^{i\theta}) & \text{if $w$ even}\\
    \Re \Li_{a_1,\ldots,a_k}(e^{i\theta}) & \text{if $w$ odd}
  \end{array} \right\}, \label{eq:defcl}\\
  \Gl{a_1,\ldots,a_k}{\theta} &= \left\{ \begin{array}{ll}
    \Re \Li_{a_1,\ldots,a_k}(e^{i\theta}) & \text{if $w$ even}\\
    \Im \Li_{a_1,\ldots,a_k}(e^{i\theta}) & \text{if $w$ odd}
  \end{array} \right\}, \label{eq:defgl}
\end{align}
where $w=a_1+\ldots+a_k$ is the weight of the function.
As illustrated in \eqref{eq:defcl2}, the Clausen and Glaisher functions
alternate between being cosine and sine series with the parity of the
dimension. Of particular importance will be the case of $\theta = \pi/3$ which
has also been considered in \cite{mcv}.

Our other notation and usage is largely consistent with that in \cite{lewin2}
and the newly published \cite{NIST}, in which most of the requisite
material is described.  Finally, a recent elaboration of what is meant when we
speak about evaluations and ``closed forms'' is to be found in \cite{closed}.

\section{Log-sine integrals} \label{sec:logsin}

For $n=1,2, \ldots$, we consider the \emph{log-sine integrals} defined by
\begin{equation}\label{eq:slx}
  \Ls{n}{\sigma} := -\int_{0}^{\sigma}\log^{n-1} \left|2\,\sin \frac\theta 2\right|\,{\md\theta}
\end{equation}
and their moments for $k\ge 0$ given by
\begin{equation}\label{eq:slxm}
  \LsD{n}{k}{\sigma}
  := -\int_{0}^{\sigma}\theta^k\,\log^{n-1-k} \left|2\,\sin \frac\theta 2\right| \,{\md\theta}.
\end{equation}
In each case the modulus is not needed for $0 \le \sigma \le 2\pi$.  Various
log-sine integral evaluations may be found in \cite[\S7.6 \& \S7.9]{lewin2}.

We observe that $\Ls{1}{\sigma} = -\sigma$ and that $\LsD{n}{0}{\sigma} =
\Ls{n}{\sigma}$.  This is the notation used by Lewin \cite{lewin, lewin2}. In
particular,
\begin{equation}\label{eq:defcl2}
  \Ls{2}{\sigma} = \Cl{2}{\sigma}
  := \sum_{n=1}^\infty \frac{\sin(n\sigma)}{n^2}
\end{equation}
is the Clausen function introduced in \eqref{eq:defcl}.

\subsection[Log-sine integrals at pi]{Log-sine integrals at $\pi$}\label{ssec:lspi}

We first recall that the log-sine integrals at $\pi$ can always be evaluated in
terms of zeta values.  This is a consequence of the exponential generating
function \cite[Eqn.  (7.109)]{lewin2}
\begin{equation}\label{eq:lsatpiogf}
  -\frac1\pi\,\sum_{m=0}^{\infty} \Ls{m+1}{\pi} \,\frac{u^m}{m!}
  =\frac{\Gamma\left(1+u\right)}{\Gamma^2\left(1+\frac u2\right)}
  = \binom{u}{u/2}.
\end{equation}
This will be revisited and further explained in Section \ref{sec:walks}.

\begin{example}[Values of $\Ls{n}{\pi}$]\label{ex:pi}
  For instance, we have $\Ls{2}{\pi} = 0$ as well as
  \begin{align*}
    -\Ls{3}{\pi} &= {\frac{1}{12}}\,\pi^3\\
    \Ls{4}{\pi} &= \frac32\pi\,\zeta(3)\\
    -\Ls{5}{\pi} &= \frac{19}{240}\,\pi^5\\ 
    \Ls{6}{\pi} &= {\frac{45}{2}}\,\pi \,\zeta(5)+\frac54\,\pi^3\zeta(3)\\
   - \Ls{7}{\pi} &= {\frac{275}{1344}}\,\pi^7+{\frac{45}{2}}\,\pi \, \zeta^2(3)\\
    \Ls{8}{\pi} &= {\frac{2835}{4}}\,\pi \,\zeta(7)+ {\frac{315}{8}}\,\pi^3\zeta(5)
      +{\frac{133}{32}}\,\pi^5\zeta(3),
  \end{align*}
  and so forth.  Note that these values may be conveniently obtained from
  \eqref{eq:lsatpiogf} by a computer algebra system as the following snippet of
  \emph{Maple} code demonstrates:\\
  \texttt{for k to 6 do simplify(subs(x=0,diff(Pi*binomial(x,x/2),x\$k))) od}\\
  More general log-sine evaluations with an emphasis on automatic evaluations
  have been studied in \cite{logsin-evaluations}.
  \qede
\end{example}

For general log-sine integrals, in \cite{logsin-evaluations} the following
computationally effective exponential generating function was obtained.

\begin{theorem}[Generating function for $\LsD{n+k+1}{k}{\pi}$]\label{prop:gfb}
For $2|\mu| <\lambda <1$ we have
  \begin{align}
    \sum_{n,k \ge 0}\LsD{n+k+1}{k}{\pi}\frac{\lambda^n}{n!}\frac{(i\mu)^k}{k!}
        &= -i \sum_{n\ge0} \binom{\lambda}{n}
    \frac{(-1)^n\, \e^{i\pi\frac\lambda2} - \e^{i\pi\mu}}{\mu-\frac\lambda2+n}. \label{eq:gfsum}
  \end{align}
\end{theorem}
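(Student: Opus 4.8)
The plan is to substitute the integral definition \eqref{eq:slxm} of the log-sine moments directly into the double sum and recognise the result as a single trigonometric integral. Since the exponent $n+k+1-1-k=n$ gives $\LsD{n+k+1}{k}{\pi}=-\int_0^\pi\theta^k\log^n\left|2\sin\frac\theta2\right|\id\theta$, I would interchange summation with integration (legitimate in the stated range, where the double series converges absolutely) so that the two inner sums collapse to exponentials:
\begin{equation*}
  \sum_{n,k\ge0}\LsD{n+k+1}{k}{\pi}\frac{\lambda^n}{n!}\frac{(i\mu)^k}{k!}
  = -\int_0^\pi \left|2\sin\tfrac\theta2\right|^{\lambda}\,\e^{i\mu\theta}\id\theta
  = -\int_0^\pi \left(2\sin\tfrac\theta2\right)^{\lambda}\,\e^{i\mu\theta}\id\theta ,
\end{equation*}
the modulus being redundant on $[0,\pi]$. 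Everything now reduces to evaluating this one integral.

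Next I would evaluate it by a binomial expansion. Using the elementary factorisation $2\sin\frac\theta2 = i\,\e^{-i\theta/2}(1-\e^{i\theta})$, which on $(0,\pi)$ reads $1-\e^{i\theta}=2\sin\frac\theta2\,\e^{i(\theta/2-\pi/2)}$ with the principal argument lying in $(-\pi/2,0)$, one checks with principal branches that
\begin{equation*}
  \left(2\sin\tfrac\theta2\right)^{\lambda}
  = \e^{i\pi\lambda/2}\,\e^{-i\lambda\theta/2}\,(1-\e^{i\theta})^{\lambda}.
\end{equation*}
For $0<\lambda<1$ the principal-branch series $(1-\e^{i\theta})^{\lambda}=\sum_{n\ge0}\binom{\lambda}{n}(-1)^n\e^{in\theta}$ converges on the unit circle away from $\theta=0$, so after multiplying by $\e^{i\mu\theta}$ I would integrate term by term via $\int_0^\pi \e^{i\alpha\theta}\id\theta=(\e^{i\pi\alpha}-1)/(i\alpha)$ with $\alpha=\mu+n-\frac\lambda2$. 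Substituting $\e^{i\pi\alpha}=(-1)^n\e^{i\pi\mu}\e^{-i\pi\lambda/2}$ and distributing the prefactor $\e^{i\pi\lambda/2}$, the two factors $(-1)^n$ cancel in the numerator and leave
\begin{equation*}
  -\int_0^\pi \left(2\sin\tfrac\theta2\right)^{\lambda}\e^{i\mu\theta}\id\theta
  = i\sum_{n\ge0}\binom{\lambda}{n}\frac{\e^{i\pi\mu}-(-1)^n\e^{i\pi\lambda/2}}{\mu-\tfrac\lambda2+n},
\end{equation*}
which is exactly \eqref{eq:gfsum} once a sign is pulled through the numerator. As a sanity check, setting $\mu=0$ retains only the $k=0$ terms and should reproduce the generating function \eqref{eq:lsatpiogf} for $\Ls{n+1}{\pi}$.

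The main obstacle is analytic rather than algebraic: justifying the two interchanges of summation and integration, and in particular the term-by-term integration of the binomial series up to the endpoint $\theta=0$, where $|\e^{i\theta}|=1$ and convergence is only conditional. I expect to control this with an Abel-type or dominated-convergence argument, the estimate $\binom{\lambda}{n}=O(n^{-\lambda-1})$ showing the resulting $n$-series behaves like $\sum n^{-\lambda-2}$; the hypothesis $\lambda<1$ keeps the integrand integrable at the origin, while $2|\mu|<\lambda$ secures convergence of the ambient double series in $\lambda$ and $\mu$. Handling the principal branch consistently in the factorisation is the other point that must be made explicit.
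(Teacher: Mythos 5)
Your proof is correct, and it is essentially the standard derivation: the paper itself states this theorem without proof, quoting it from \cite{logsin-evaluations}, where the argument is exactly yours --- collapse the double sum into $-\int_0^\pi \left(2\sin\frac\theta2\right)^{\lambda}\e^{i\mu\theta}\id\theta$, factor $1-\e^{i\theta}=2\sin\frac\theta2\,\e^{i(\theta-\pi)/2}$ with principal branches, and integrate the binomial series term by term (noting $2|\mu|<\lambda$ keeps every denominator $\mu-\frac\lambda2+n$ nonzero). Your closing analytic worry is lighter than you fear: since $\lambda>0$, the bound $\binom{\lambda}{n}=O(n^{-\lambda-1})$ makes $\sum_n\left|\binom{\lambda}{n}\right|$ finite, so the binomial series converges absolutely and uniformly on the whole unit circle and term-by-term integration needs no Abel-type argument, while the first interchange is plain Fubini--Tonelli because $\e^{|\lambda\log(2\sin\frac\theta2)|}\e^{|\mu|\theta}\le 2\,\e^{\pi|\mu|}\left(2\sin\frac\theta2\right)^{-|\lambda|}$ is integrable for $|\lambda|<1$.
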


One may extract one-variable generating functions from \eqref{eq:gfsum}. For instance,
\begin{equation*}
  \sum_{n=0}^\infty \LsD{n+2}{1}{\pi}\frac{\lambda^n}{n!}
  = \sum_{n=0}^{\infty} \binom{\lambda}{n} \frac{-1 + (-1)^n \cos\frac{\pi\lambda}2}{\left(n -\frac\lambda2 \right)^2}.
\end{equation*}
The log-sine integrals at $\pi/3$  are especially useful as illustrated in
\cite{mcv} and are discussed at some length in \cite{logsin1} where other
applications to Mahler measures are given.

\subsection{Extensions of the log-sine integrals}\label{ssec:ls3}

It is possible to extend some of these considerations to the log-sine-cosine integrals
\begin{equation}\label{eq:deflsc}
  \Lsc{m}{n}{\sigma} := -\int_{0}^{\sigma}\log^{m-1} \left|2\,\sin \frac \theta 2\right|\,\log^{n-1} \left|2\,\cos \frac \theta 2\right| \id\theta.
\end{equation}
Then $\Lsc{m}{1}{\sigma}= \Ls{m}{\sigma}$.
Let us set
\begin{equation}
  \mu_{m,n}(1-x,1+x) :=\mu(\underbrace{1-x, \cdots,1-x}_m,\underbrace{1+x, \cdots,1+x}_n).
\end{equation}

Then immediately from the definition we obtain the following:

\begin{theorem}[Evaluation of $\mu_{m,n}(1-x,1+x)$]\label{thm:gls}
  For non-negative integers $m,n$,
  \begin{equation}\label{eq:gslmahler}
    \mu_{m,n}(1-x,1+x) = -\frac1\pi\,\Lsc{m+1}{n+1}{\pi}.
  \end{equation}
\end{theorem}

In every case this is evaluable in terms of zeta values. Indeed, using the result in \cite[\S7.9.2, (7.114)]{lewin2}, we obtain the
generating function
\begin{equation}\label{eq:glsatpiogf}
  {\rm gs}(u,v):=-\frac1\pi\, \sum_{m,n=0}^\infty \Lsc{m+1}{n+1}{\pi} \,\frac{u^m}{m!}\frac{v^n}{n!}
  =\frac{2^{u+v}}\pi\,\frac{\Gamma \left( \frac{1+u}2 \right)\Gamma \left(\frac{1+v}2 \right)}{\Gamma \left( 1+\frac{u+v}2 \right)}.
\end{equation}
From the duplication formula for the Gamma function this can be rewritten as
\[{\rm gs}(u,v) = \binom{u}{\frac u2} \binom{v}{\frac v2} {\frac{\Gamma \left(
1+\frac u2 \right) \Gamma \left(1+\frac v2 \right) }{\Gamma \left( 1+\frac{u+v}2 \right) }},
\]
so that
\[{\rm gs}(u,0) = \binom{u}{\frac u2} = {\rm gs}(u,u).\]
From here it is apparent that \eqref{eq:glsatpiogf} is an extension of
\eqref{eq:lsatpiogf}:

\begin{example}[Values of $\Lsc{n}{m}{\pi}$]\label{ex:gpi}
  For instance,
  \begin{align*}\label{ex:gls}
    \mu_{2,1}(1-x,1+x) &= \mu_{1,2}(1-x,1+x) = \frac{1}{4}\zeta(3), \\
    \mu_{3,2}(1-x,1+x) &= \frac{3}{4}\zeta(5)-\frac{1}{8}\pi^2\zeta(3), \\
    \mu_{6,3}(1-x,1+x) &=
    \frac{315}{4}\zeta(9) + \frac{135}{32}\pi^2\zeta(7) + \frac{309}{128}\pi^4\zeta(5)
    - \frac{45}{256}\pi^6\zeta(3) - \frac{1575}{32}\zeta^3(3).
  \end{align*}
  As in Example \ref{ex:pi} this can be easily obtained with a line of code
  in a computer algebra system such as \emph{Mathematica} or \emph{Maple}.
  \qede
\end{example}

\begin{remark}
  Also ${\rm gs}(u,-u)=\sec(\pi/(2u)).$ From this we deduce for $n=0,1,2,\ldots$ that
  \[\sum_{k=0}^n (-1)^k \mu_{k,n-k}(1-x,1+x) = |E_{2n}|\,\frac{\left(\frac\pi2\right)^{2n}}{(2n)!} =\frac4\pi \,L_{-4}(2n+1),\]
  where $E_{2n}$ are the even Euler numbers: $1,-1,5,-61,1385 \ldots$\,.
  \qede
\end{remark}

A more recondite \emph{extended log-sine integral of order three} is developed
in \cite[\S 8.4.3]{lewin2} from properties of the trilogarithm. It is defined by
\begin{align}
  \Ls3{\theta,\omega}
  &:=-\int_0^\theta\,\log \left|2\sin \frac\sigma 2\right|\, \log\left|2\sin \frac{ \sigma+\omega}2 \right| \id \sigma,
\end{align}
so that $\Ls3{\theta,0} = \Ls3{\theta}.$
This extended log-sine integral reduces as follows:
\begin{align}\label{eq:ls3}
  -\Ls3{2\theta, 2\omega}
  &= \frac12 \Ls3{2\omega} -\frac12 \Ls3{2\theta}
  -\frac12 \Ls3{2\theta+2\omega} \nonumber\\
  &\quad- 2\Im\Li_3\left(\frac{\sin(\theta) \e^{i\omega}}{\sin(\theta+\omega)}\right)
  +\theta\log^2 \left( \frac{\sin(\theta)}{\sin(\theta+\omega)}\right) \nonumber \\
  &\quad+ \log\left( {\frac{\sin(\theta)}{\sin(\theta+\omega)}} \right)
  \left\{ \Cl{2}{2\theta} + \Cl{2}{2\omega} - \Cl{2}{2\theta+2\omega} \right\}.
\end{align}

We note that $-\frac{1}{2\pi} \Ls3{2\pi,\omega} = \mu(1-x,1-e^{i\omega}x)$ but
this is more easily evaluated by Fourier techniques. Indeed one has:

\begin{proposition}[A dilogarithmic measure, part I \cite{klo}]\label{prop:diloga}
  For two complex numbers $u$ and $v$ we have
  \begin{equation}\mu(1-u\,x,1-v\,x) =\left\{
    \begin{array}{ll}
      \frac 12\,\Re\Li_2\left(v\overline{u} \right), & \text{if $|u|  \le 1,|v| \le 1$,} \\
      \frac 12\,\Re\Li_2\left(\frac v{\overline{u}}\right), & \text{if $|u| \ge 1,|v| \le 1$,} \\
      \frac 12\,\Re\Li_2\left(\frac 1{v\overline{u}} \right)+ \log |u| \log |v| , & \text{if $|u|\ge 1 ,|v| \ge 1$.}
    \end{array}
    \right.
  \end{equation}
\end{proposition}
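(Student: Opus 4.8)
The plan is to evaluate the defining double integral directly by Fourier (Parseval) techniques, as the sentence preceding the statement suggests. After the substitution $\theta=2\pi t$ we must compute $\frac{1}{2\pi}\int_0^{2\pi}\log\left|1-u\,\e^{i\theta}\right|\,\log\left|1-v\,\e^{i\theta}\right|\id\theta$. The one tool needed is the expansion $\log\left|1-a\,\e^{i\theta}\right| = -\Re\sum_{n\ge1}\frac{a^n\e^{in\theta}}{n} = -\frac12\sum_{n\ge1}\frac1n\left(a^n\e^{in\theta}+\bar a^n\e^{-in\theta}\right)$, which is absolutely (hence uniformly in $\theta$) convergent for $|a|<1$ and simply displays the Fourier coefficients of $\log|1-a\,\e^{i\theta}|$. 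Orthogonality, $\frac1{2\pi}\int_0^{2\pi}\e^{ik\theta}\id\theta=\delta_{k,0}$, then collapses the product of the two expansions to a single diagonal sum.

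First I would settle the base case $|u|\le1,\ |v|\le1$. Inserting both series, multiplying out, and integrating term by term, only the matching-frequency pairings survive, and they occur in conjugate pairs; this yields $\frac14\sum_{n\ge1}\frac1{n^2}\left(u^n\bar v^n+\bar u^n v^n\right) = \frac12\Re\sum_{n\ge1}\frac{(u\bar v)^n}{n^2} = \frac12\Re\Li_2(v\bar u)$, where the last equality uses $\Re\Li_2(z)=\Re\Li_2(\bar z)$.

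To reach the remaining regions I would reduce them to the base case via the elementary identity $\log\left|1-a\,\e^{i\theta}\right| = \log|a| + \log\left|1-a^{-1}\e^{-i\theta}\right|$, valid for $|a|\ge1$; this is the pointwise companion of Jensen's formula \eqref{eq:sl}, and its second term is now amenable to the convergent expansion (note the exponent has flipped to $\e^{-i\theta}$). When $|u|\ge1,\ |v|\le1$ the extra constant $\log|u|$ multiplies the mean-zero $v$-series and integrates away, while the diagonal sum produces the middle branch. When $|u|\ge1$ and $|v|\ge1$ both logarithms contribute a constant, their product $\log|u|\log|v|$ is the only surviving constant term, and the diagonal sum gives $\frac12\Re\Li_2\left(1/(v\bar u)\right)$, recovering the third line.

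The orthogonality bookkeeping is routine; two points deserve care. The genuine analytic obstacle is justifying term-by-term integration up to the boundary $|a|=1$, where $\log|1-a\,\e^{i\theta}|$ acquires an integrable logarithmic singularity: for $|a|<1$ absolute convergence makes the interchange immediate, and one passes to $|a|=1$ by dominated convergence or Abel's theorem. The delicate algebraic point is tracking the complex conjugates so that the argument of $\Li_2$ is correct in each region, and here a decisive check is continuity of the three branches across $|u|=1$ and $|v|=1$. Since $1/u=\bar u$ when $|u|=1$, continuity with the base case forces the middle region to carry the argument $v/u$; this is indeed the value my diagonal computation returns, and matching it against the stated form is the one place where the conjugation must be pinned down with care.
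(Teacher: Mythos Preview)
Your approach is correct and is precisely the one the paper intends: it says the result ``is proven much as is \eqref{eq:mu2} of Proposition \ref{prop:mu2}'', and that proof uses exactly the Parseval expansion of $\log|1-\alpha\e^{i\omega}|$ together with the decomposition $\log|1-\alpha\e^{i\omega}|=\log|\alpha|+\log|1-\alpha^{-1}\e^{i\omega}|$ for $|\alpha|\ge1$ and Jensen's formula to kill the cross term. Your observation that the diagonal computation and continuity across $|u|=1$ both yield the argument $v/u$ rather than the printed $v/\bar u$ in the middle branch is well taken.
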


This is proven much as is \eqref{eq:mu2} of Proposition \ref{prop:mu2}. In
Lewin's terms \cite[A.2.5]{lewin2} for $0 < \theta\le 2\pi$ and $r\ge0$ we may
write
\begin{equation}\label{def:Li}
  \Re\Li_2\left(re^{i\theta}\right) =: \Li_2\left(r,\theta\right)= -\frac 12\int_0^r \log\left(t^2+1-2t \cos \theta\right)\frac{\md t}{t},
\end{equation}
with the reflection formula
\begin{equation}
  \Li_2\left(r,\theta\right) + \Li_2\left(\frac1r,\theta\right)
  = \zeta(2)- \frac12\,\log^2 r+\frac12\,(\pi-\theta)^2.
\end{equation}
This leads to:

\begin{proposition}[A dilogarithmic measure, part II]\label{prop:dilogb}
  For complex numbers $u=re^{i\theta}$ and $v=se^{i\tau}$ we have
  \begin{equation}\mu(1-u \, x,1-v \, x) =\left\{
    \begin{array}{ll}
      \frac 12\,\Li_2\left(rs,\theta-\tau\right)  & \text{if $r \le 1,s \le 1$,} \\
      \frac 12\,\Li_2\left(\frac s r,\theta+\tau\right), & \text{if $r\ge 1,s \le 1$,} \\
      \frac 12\,\Li_2\left(\frac1{sr},\theta-\tau\right)+ \log r \log s , & \text{if $r\ge 1 ,s \ge 1$.}
    \end{array}
    \right.
  \end{equation}
\end{proposition}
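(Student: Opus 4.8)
The plan is to obtain Proposition~\ref{prop:dilogb} as the polar-coordinate restatement of Proposition~\ref{prop:diloga}, which I take as given. Writing $u = re^{i\theta}$ and $v = se^{i\tau}$ gives $|u| = r$, $|v| = s$, and $\bar u = re^{-i\theta}$, so the three hypotheses $\{|u|\le1,\,|v|\le1\}$, $\{|u|\ge1,\,|v|\le1\}$, $\{|u|\ge1,\,|v|\ge1\}$ of Proposition~\ref{prop:diloga} correspond exactly to the cases $\{r\le1,\,s\le1\}$, $\{r\ge1,\,s\le1\}$, $\{r\ge1,\,s\ge1\}$. The whole task is then to rewrite each argument of $\Re\Li_2$ in the polar form $\rho\, e^{i\phi}$ and to invoke the definition $\Li_2\left(\rho,\phi\right) = \Re\Li_2\left(\rho e^{i\phi}\right)$ of \eqref{def:Li}.

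First I would note that, from the integral representation in \eqref{def:Li}, $\Li_2\left(\rho,\phi\right)$ depends on $\phi$ only through $\cos\phi$, so it is even and $2\pi$-periodic in $\phi$; in particular $\Li_2\left(\rho,\phi\right) = \Li_2\left(\rho,-\phi\right)$, which lets me drop the sign of an angle at will. Running through the three regimes: in the first, $v\bar u = rs\, e^{i(\tau-\theta)}$, so $\tfrac12\Re\Li_2(v\bar u) = \tfrac12\Li_2\left(rs,\tau-\theta\right) = \tfrac12\Li_2\left(rs,\theta-\tau\right)$; in the second, $v/\bar u = (s/r)\, e^{i(\tau+\theta)}$, giving $\tfrac12\Li_2\left(s/r,\theta+\tau\right)$; and in the third, $1/(v\bar u) = (1/rs)\, e^{i(\theta-\tau)}$ while $\log|u|\log|v| = \log r\,\log s$, giving $\tfrac12\Li_2\left(1/(sr),\theta-\tau\right) + \log r\,\log s$. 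Each of these is precisely the corresponding line of Proposition~\ref{prop:dilogb}.

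Because the transcription is mechanical, essentially all of the content is carried by Proposition~\ref{prop:diloga}, and the only (modest) obstacle is bookkeeping---forming $\rho$ and $\phi$ correctly for each argument and applying the evenness in $\phi$ uniformly across the regions. It is worth recording why three cases appear at all: for $r>1$ one extracts a factor to write $\log|1 - u\,e^{2\pi i t}| = \log r + \log|1 - u^{-1} e^{-2\pi i t}|$ with $|u^{-1}|<1$, reducing to a modulus below $1$, and the reflection formula for $\Li_2\left(\rho,\phi\right)$ quoted just above the statement is the dilogarithm identity underlying this passage between the modulus-$\rho$ and modulus-$1/\rho$ regimes. As a final safeguard I would carry out a consistency check of the pieces along the interfaces $r=1$ and $s=1$, where $\mu(1-u\,x,1-v\,x)$ is continuous and where any slip in the angle conventions would surface; and I would note that the proposition can alternatively be established from scratch by the same Fourier-series and Jensen-formula \eqref{eq:sl} computation that underlies Proposition~\ref{prop:diloga}, bypassing the reduction entirely.
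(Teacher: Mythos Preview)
Your proposal is correct and matches the paper's approach: the paper simply introduces the polar notation $\Li_2(r,\theta)=\Re\Li_2(re^{i\theta})$ and the reflection formula, then writes ``This leads to:'' before stating Proposition~\ref{prop:dilogb}, indicating exactly the mechanical polar-coordinate transcription of Proposition~\ref{prop:diloga} that you carry out. Your explicit use of the evenness of $\Li_2(\rho,\phi)$ in $\phi$ to normalize the angle signs is a detail the paper leaves implicit but is indeed needed.
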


\section{Mahler measures and moments of random walks}\label{sec:walks}

The $s$-th moments of an $n$-step uniform random walk are given by
\[ W_n (s) = \int_0^1 \ldots \int_0^1 \left| \sum_{k = 1}^n e^{2 \pi
   i t_k} \right|^s \md t_1 \cdots \md t_n \]
and their relation with Mahler measure is observed in \cite{bswz-densities}.
In particular,
\[ W_n' (0)  = \mu (1 + x_1 + \ldots + x_{n-1}), \]
with the cases $2 \le n \le 6$
discussed in \cite{logsin1}.

Higher derivatives of $W_n$ correspond to higher Mahler measures:
\begin{equation}\label{eq:diffW}
  W_n^{(m)}(0) = \mu_m (1 + x_1 + \ldots + x_{n-1}).
\end{equation}
The evaluation $W_2(s) = \binom{s}{s/2}$  thus
explains and proves the generating function \eqref{eq:lsatpiogf}; in other
words, we find that
\begin{align}\label{w3dm}
  W_2^{(m)}(0) &= -\frac1\pi \Ls{m+1}{\pi}.
\end{align}

As a consequence of the study of random walks in \cite{bswz-densities} we
record the following generating function for $\mu_k(1+x+y)$ which follows from
\eqref{eq:diffW} and the hypergeometric expression for $W_3$ in \cite[Thm.
10]{bswz-densities}. There is a corresponding expression, using a single
Meijer-$G$ function, for $W_4$ (i.e., $\mu_m(1+x+y+z)$) given in \cite[Thm.
11]{bswz-densities}.

\begin{theorem}[Hypergeometric form for $W_3(s)$]\label{thm:W3}
  For complex $|s|<2$, we may write
  \begin{align}\label{eq:hyper}
    W_3(s) = \sum_{n=0}^\infty \mu_n(1+x+y)\frac{s^n}{n!}
    &=\frac {\sqrt{3}}{2\pi}\, 3^{s+1}
    \,\frac{\Gamma(1+\frac s2)^2}{\Gamma(s+2)}\,
    \pFq32{\frac{s+2}2, \frac{s+2}2,\frac{s+2}2}{1,\frac{s+3}2}{\frac14} \\
    &=\frac{\sqrt {3}}{\pi }\left(\frac 32\right)^{s+1}\int _{0}^{1}\!{\frac {{z}^{1+s}
    {\pFq21{1+\frac s2,1+\frac s2}{1}{\frac{z^2}4}}}{\sqrt
    {1-{z}^{2}}}}\,{\id z}. \label{eq:hyperb}
  \end{align}
\end{theorem}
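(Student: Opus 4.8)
The plan is to dispatch the three asserted identities separately, since the middle (hypergeometric) one is quoted and only the outer two require real work. The first equality, $W_3(s)=\sum_{n\ge0}\mu_n(1+x+y)\,s^n/n!$, is immediate from \eqref{eq:diffW}: since $W_3^{(n)}(0)=\mu_n(1+x+y)$, the displayed series is just the Maclaurin expansion of $W_3$. I would observe that $W_3$ is analytic for $|s|<2$ (the defining moment integral converges for $\Re(s)>-2$, and the nearest singularity of the power series sits at $s=-2$, fixing the radius of convergence as $2$), which simultaneously legitimises the term-by-term expansion and pins down the claimed domain. The closed form as a \pFq{3}{2}{\cdot}{\cdot}{\tfrac14} at argument $\tfrac14$ I would simply invoke from \cite[Thm.~10]{bswz-densities}, exactly as the surrounding text signals.

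The substantive step is the integral representation \eqref{eq:hyperb}. Writing $a:=1+\tfrac s2=\tfrac{s+2}2$, the hypergeometric factor is $\pFq{3}{2}{a,a,a}{1,a+\frac12}{\frac14}$, and I would peel off one upper parameter $a$ against the lower parameter $a+\tfrac12$ by Euler's integral representation
\[
  \pFq{3}{2}{a,a,a}{1,a+\frac12}{\frac14}
  = \frac{\Gamma(a+\frac12)}{\Gamma(a)\,\Gamma(\frac12)}
    \int_0^1 t^{a-1}(1-t)^{-1/2}\,\pFq{2}{1}{a,a}{1}{\frac t4}\id t,
\]
valid for $\Re(a+\tfrac12)>\Re(a)>0$, i.e.\ $\Re(s)>-2$. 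The substitution $t=z^2$ turns $t^{a-1}\id t$ into $2z^{2a-1}\id z=2z^{s+1}\id z$ and $(1-t)^{-1/2}$ into $(1-z^2)^{-1/2}$, producing precisely the integrand of \eqref{eq:hyperb}.

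It then remains to check that the Gamma prefactors collapse to the stated constant. Multiplying the prefactor of \eqref{eq:hyper} by the factor $2\,\Gamma(a+\tfrac12)/(\Gamma(a)\sqrt\pi)$ coming from the Euler integral leaves
\[
  \frac{\sqrt3}{\pi^{3/2}}\,3^{s+1}\,
  \frac{\Gamma(1+\tfrac s2)\,\Gamma(\tfrac{s+3}2)}{\Gamma(s+2)},
\]
and the duplication formula $\Gamma(s+2)=2^{s+1}\pi^{-1/2}\,\Gamma(1+\tfrac s2)\,\Gamma(\tfrac{s+3}2)$ reduces this to $\tfrac{\sqrt3}{\pi}\bigl(\tfrac32\bigr)^{s+1}$, matching \eqref{eq:hyperb} exactly.

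The main point requiring care is the \emph{domain of validity}: the Euler representation and the substitution are justified a priori only for $\Re(s)>-2$, whereas the theorem asserts $|s|<2$. I would close the gap by analytic continuation, noting that both sides are analytic throughout $|s|<2$ and agree on the overlapping right half-strip. For the integral side this needs the mild observation that the integrand is integrable there: $z^{s+1}$ is integrable at $z=0$ since $\Re(s)>-2$, the factor $(1-z^2)^{-1/2}$ is integrable at $z=1$, and $\pFq{2}{1}{a,a}{1}{\frac{z^2}4}$ stays bounded because its argument never exceeds $\tfrac14<1$. With this, both expressions are holomorphic on $|s|<2$ and coincide on an open subset, hence everywhere on the disc, completing the identification.
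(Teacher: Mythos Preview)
Your argument is correct and follows exactly the route the paper indicates: the series identity and the $\pFq32{}{}{}$ form are taken from \cite{bswz-densities}, and your Euler integral representation for the $\pFq32{}{}{}$ is precisely \cite[Eqn.~(16.5.2)]{NIST}, which is all the paper cites for \eqref{eq:hyperb}. You have simply spelled out the substitution $t=z^2$ and the Gamma-factor bookkeeping (via the duplication formula) that the paper leaves implicit, together with a clean analytic-continuation remark for the full disc $|s|<2$.
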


\begin{proof}
  Equation  \eqref{eq:hyper} is proven in \cite[Thm. 10]{bswz-densities}, while
  \eqref{eq:hyperb} is a consequence of \eqref{eq:hyper} and \cite[Eqn.
  (16.5.2)]{NIST}.
\end{proof}

We shall exploit Theorem \ref{thm:W3} next, in Section \ref{sec:epsilon}. For
integers $n \ge 1$ we also have
\begin{equation}\label{eq:mu-trigintegral}
  \mu_n(1+x+y) =\frac{1}{4\pi^2} \int_0^{2\pi} \md\theta
    \int_0^{2\pi} \left(\Re\log\left(1-2\sin(\theta)\e^{i\,\omega}\right)\right)^n\id \omega,
\end{equation}
as follows directly from the definition and some simple trigonometry, since
$\Re\log =\log|\cdot|$. This is the basis for the  evaluations of Section
\ref{sec:mun}. In particular, in Section \ref{sec:mun} we will evaluate the
inner integral in terms of multiple harmonic polylogarithms.

\section{Epsilon expansion of $W_3$}\label{sec:epsilon}

In this section we use known results from the $\varepsilon$-expansion of
hypergeometric functions \cite{dk1-eps, davydychev-bin} to obtain
$\mu_n(1+x+y)$ in terms of multiple inverse binomial sums. We then derive
complete evaluations of $\mu_1(1+x+y)$, $\mu_2(1+x+y)$ and $\mu_3(1+x+y)$.  An
alternative approach will be pursued in Sections \ref{sec:mun} and
\ref{sec:mu23}.

In light of Theorem \ref{thm:W3}, the evaluation of $\mu_n(1+x+y)$
is essentially reduced to the Taylor expansion
\begin{align}\label{eq:alphadef}
   \pFq32{\frac{\varepsilon+2}2, \frac{\varepsilon+2}2,\frac{\varepsilon+2}2}{1,\frac{\varepsilon+3}2}{\frac14}
   = \sum_{n=0}^\infty \alpha_n \varepsilon^n.
\end{align}
Indeed, from \eqref{eq:hyper} and Leibniz' rule we have
\begin{align}\label{eq:mun-sum}
  \mu_n(1+x+y) &= \frac {\sqrt{3}}{2\pi} \sum _{k=0}^n \binom{n}{k} \alpha_k \beta_{n-k}
\end{align}
where $\beta_k$ is defined by
\begin{align}\label{eq:beta-def}
   3^{ \varepsilon+1} \,\frac{\Gamma(1+\frac \varepsilon2)^2}{\Gamma(2+\varepsilon)}
   = \sum_{n=0}^\infty \beta_n \varepsilon^n.
\end{align}
Note that $\beta_k$ is easy to compute as illustrated in Example \ref{ex:pi}.
The expansion of hypergeometric functions in terms of their parameters as in
\eqref{eq:alphadef} occurs in physics \cite{dk1-eps, davydychev-bin} in the
context of the evaluation of Feynman diagrams and is commonly referred to
as \emph{epsilon expansion}, thus explaining the choice of variable in
\eqref{eq:alphadef}.

\begin{remark}
  From \eqref{eq:beta-def} we see that $\beta_k$ may be computed directly from
  the coefficients $\gamma_k$ defined by the Taylor expansion
  \begin{equation*}
    \frac{\Gamma(1+\frac\varepsilon2)^2}{\Gamma(1+\varepsilon)}
    = \frac{1}{\binom{\varepsilon}{\varepsilon/2}} = \sum_{n=0}^\infty \gamma_n \varepsilon^n.
  \end{equation*}
  Appealing to \eqref{eq:lsatpiogf} we find that $\gamma_n$ is recursively determined
  by $\gamma_0=1$ and
  \begin{equation*}
    \gamma_n = \frac{1}{\pi} \sum_{k=1}^n \Ls{k+1}{\pi} \frac{\gamma_{n-k}}{k!}.
  \end{equation*}
  In particular, the results of Section \ref{ssec:lspi} show that $\gamma_n$ can
  always be expressed in terms of zeta values. Accordingly, $\beta_n$ evaluates
  in terms of $\log3$ and zeta values.
  \qede
\end{remark}

Let $S_k(j) := \sum_{m=1}^j \frac{1}{m^k}$ denote the harmonic numbers of order
$k$.  Following \cite{davydychev-bin} we abbreviate $S_k:=S_k(j-1)$ and
$\bar{S}_k:=S_k(2j-1)$ in order to make it more clear which results in this
reference contribute to the evaluations below.  As in \cite[Appendix
B]{dk1-eps}, we use the duplication formula $(2a)_{2j} = 4^j (a)_j (a+1/2)_j$
as well as the expansion
\begin{align}
  \frac{(m+a\varepsilon)_j}{(m)_j} = \exp\left[ -\sum_{k=1}^\infty \frac{(-a\varepsilon)^k}{k} \left[S_k(j+m-1) - S_k(m-1)\right] \right]
\end{align}
where $m$ is a positive integer
to write:
\begin{align}
\nonumber  \pFq32{\frac{\varepsilon+2}2,
\frac{\varepsilon+2}2,\frac{\varepsilon+2}2}{1,\frac{\varepsilon+3}2}{\frac14}
  &= \sum_{j=0}^\infty \frac{(1+\varepsilon/2)_j^3}{4^j (j!)^2 (3/2+\varepsilon/2)_j}
  \\ \nonumber
  &= \sum_{j=0}^\infty \frac{(1+\varepsilon/2)_j^4}{(j!)^2 (2+\varepsilon)_{2j}}
  \\ \nonumber
  &= \sum_{j=0}^\infty \frac{2}{j+1} \frac{1}{\binom{2(j+1)}{j+1}}
  \left[\frac{(1+\varepsilon/2)_j}{j!}\right]^4 \left[\frac{(2+\varepsilon)_{2j}}{(2j+1)!}\right]^{-1} \\
  &= \sum_{j=1}^\infty \frac{2}{j} \frac{1}{\binom{2j}{j}}
  \exp\left[ \sum_{k=1}^\infty \frac{(-\varepsilon)^k}{k} A_{k,j} \right] \label{eq:hypeps}
\end{align}
where
\begin{align}\label{eq:akj}
  A_{k,j} :=S_k(2j-1)-1 -
  4\frac{S_k(j-1)}{2^k} =
  \sum_{m=2}^{2j-1}\frac{2(-1)^{m+1}-1}{m^k}.
\end{align}

We can now read off the terms $\alpha_n$ of the $\varepsilon$-expansion:

\begin{theorem}\label{thm:an}
  For $n =0,1,2,\ldots$
  \begin{align}\label{eq:alphan}
    \alpha_n = [\varepsilon^n] \;
    \pFq32{\frac{\varepsilon+2}2, \frac{\varepsilon+2}2,\frac{\varepsilon+2}2}{1,\frac{\varepsilon+3}2}{\frac14}
    &= (-1)^n \sum_{j=1}^\infty \frac{2}{j} \frac{1}{\binom{2j}{j}} \sum
    \prod_{k=1}^n \frac{A_{k,j}^{m_k}}{m_k! k^{m_k}}
  \end{align}
  where the inner sum is over all non-negative integers $m_1,\ldots,m_n$ such
  that $m_1+2m_2+\ldots+n m_n = n$.
\end{theorem}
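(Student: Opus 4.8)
The plan is to extract the coefficient $\alpha_n = [\varepsilon^n]$ directly from the closed form \eqref{eq:hypeps}, which the preceding derivation has already established as
\[
  \pFq32{\frac{\varepsilon+2}2, \frac{\varepsilon+2}2,\frac{\varepsilon+2}2}{1,\frac{\varepsilon+3}2}{\frac14}
  = \sum_{j=1}^\infty \frac{2}{j} \frac{1}{\binom{2j}{j}}
  \exp\left[ \sum_{k=1}^\infty \frac{(-\varepsilon)^k}{k} A_{k,j} \right].
\]
Since the derivation of \eqref{eq:hypeps} may be assumed, the work remaining is purely formal: expand the exponential factor as a power series in $\varepsilon$, identify the coefficient of $\varepsilon^n$, and justify interchanging this expansion with the sum over $j$.

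First I would fix $j$ and treat the inner exponential. Writing $E_j(\varepsilon) := \exp\left[ \sum_{k\ge 1} \frac{(-\varepsilon)^k}{k} A_{k,j} \right]$, the standard power-series expansion of the exponential gives
\[
  E_j(\varepsilon)
  = \prod_{k=1}^\infty \exp\left[ \frac{(-\varepsilon)^k}{k} A_{k,j} \right]
  = \prod_{k=1}^\infty \sum_{m_k\ge 0} \frac{1}{m_k!}\left( \frac{(-\varepsilon)^k A_{k,j}}{k} \right)^{m_k}.
\]
Collecting the total power of $\varepsilon$, a term indexed by $(m_1,m_2,\ldots)$ contributes $\varepsilon$ to the power $\sum_k k\,m_k$ with coefficient $\prod_k \frac{(-1)^{k m_k} A_{k,j}^{m_k}}{m_k!\, k^{m_k}}$. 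The sign $\prod_k (-1)^{k m_k} = (-1)^{\sum_k k m_k}$ equals $(-1)^n$ exactly when $\sum_k k\,m_k = n$, which factors out of the inner sum as the global $(-1)^n$ in \eqref{eq:alphan}. Restricting to $\sum_k k m_k = n$ forces $m_k = 0$ for $k>n$, so only $m_1,\ldots,m_n$ are free and the constraint becomes $m_1 + 2m_2 + \cdots + n m_n = n$, matching the stated range of summation.

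The only genuine point requiring care, and the step I expect to be the main obstacle, is the interchange of the coefficient extraction with the outer sum over $j$: one must verify that $[\varepsilon^n]\sum_j(\cdots) = \sum_j [\varepsilon^n](\cdots)$. This is legitimate because the hypergeometric series is analytic in $\varepsilon$ in a neighborhood of $0$ (the parameters $\tfrac{\varepsilon+2}{2}$ etc.\ vary analytically and the argument $\tfrac14$ lies inside the disk of convergence), so the double series converges absolutely and uniformly for small $|\varepsilon|$; the factor $\frac{2}{j}\binom{2j}{j}^{-1}$ decays geometrically like $4^{-j}$, dominating the polynomially-growing harmonic sums $A_{k,j}$ and securing the termwise extraction. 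Once this interchange is justified, substituting the inner coefficient back under $\sum_j \frac{2}{j}\binom{2j}{j}^{-1}$ yields \eqref{eq:alphan} directly, completing the proof.
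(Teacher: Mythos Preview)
Your proposal is correct and follows essentially the same route as the paper: both extract the coefficient of $\varepsilon^n$ directly from \eqref{eq:hypeps}. The paper simply invokes Fa\`{a} di Bruno's formula for the $n$-th derivative of $\exp\circ g$, whereas you unpack that formula by writing the exponential as $\prod_k \exp\bigl((-\varepsilon)^k A_{k,j}/k\bigr)$ and multiplying out the power series; the resulting partition sum is identical, and your extra paragraph justifying the interchange with the sum over $j$ is a detail the paper leaves implicit.
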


\begin{proof}
  Equation \eqref{eq:alphan} may be derived  from \eqref{eq:hypeps} using, for
  instance, Fa\`{a} di Bruno's formula for the $n$-th derivative of the composition
  on two functions.
\end{proof}

\begin{example}[$\alpha_0$, $\alpha_1$ and $\alpha_2$]
  In particular,
  \begin{align*}
  \alpha_1 = [\varepsilon] \;
    \pFq32{\frac{\varepsilon+2}2, \frac{\varepsilon+2}2,\frac{\varepsilon+2}2}{1,\frac{\varepsilon+3}2}{\frac14}
    &= -\sum_{j=1}^\infty \frac{2}{j} \frac{1}{\binom{2j}{j}} A_{1,j} \\
    &= -\sum_{j=1}^\infty \frac{2}{j} \frac{1}{\binom{2j}{j}} \left[ \bar{S_1}-2S_1-1
    \right].
  \end{align*}
  Such \emph{multiple inverse binomial sums} are studied in
  \cite{davydychev-bin}. In particular, using \cite[(2.20),
  (2.21)]{davydychev-bin} we find
  \begin{align}
  \alpha_0 &= \frac{2\pi}{3\sqrt{3}}, \label{eq:eps0}\\
    \alpha_1 &= \frac{2}{3\sqrt{3}} \left[ \pi - \pi\log3 + \Ls{2}{\frac\pi3} \right].\label{eq:eps1}
  \end{align}

  For the second term $\alpha_2$ in the $\varepsilon$-expansion
  \eqref{eq:hypeps} produces
  \begin{align*}
    [\varepsilon^2] \;
    \pFq32{\frac{\varepsilon+2}2, \frac{\varepsilon+2}2,\frac{\varepsilon+2}2}{1,\frac{\varepsilon+3}2}{\frac14}
    &= \sum_{j=1}^\infty \frac{1}{j} \frac{1}{\binom{2j}{j}} \left[ A_{1,j}^2 + A_{2,j} \right] \\
    &= \sum_{j=1}^\infty \frac{1}{j} \frac{1}{\binom{2j}{j}} \left[ \bar{S}_2-S_2 + (\bar{S}_1-2S_1)^2 -2\bar{S}_1+4S_1 \right].
  \end{align*}
  Using \cite[(2.8),(2.22)-(2.24)]{davydychev-bin} we obtain
  \begin{align}\label{eq:eps2}
   \alpha_2
    &= \frac{2}{3\sqrt{3}} \left[ \frac{\pi}{72} - \pi\log3 + \frac12 \pi\log3
    + (1-\log3)\Ls{2}{\frac\pi3} \right. \nonumber\\
    &\quad\left.+ \frac32\Ls{3}{\frac\pi3}
    + \frac32\Ls{3}{\frac{2\pi}{3}} - 3\Ls{3}{\pi} \right].
  \end{align}
  \qede
\end{example}

These results provide us with evaluations of $\mu_1(1+x+y)$ and
$\mu_2(1+x+y)$ as given next. As expected, the result for $\mu_1(1+x+y)$ agrees
with Smyth's original evaluation, and the result for
$\mu_2(1+x+y)$ agrees with our prior evaluation in \cite{logsin1}.
The latter evaluation will be recalled in Section \ref{sec:mu2}.

\begin{theorem}[Evaluation of $\mu_1(1+x+y)$ and $\mu_2(1+x+y)$]\label{thm:mu21eps}
  We have
  \begin{align}
    \mu_1(1+x+y) &= \frac{1}{\pi} \Ls{2}{\frac\pi3}, \label{eq:mu1eps}\\
    \mu_2(1+x+y) &= \frac3\pi \Ls{3}{\frac{2\pi}{3}} + \frac{\pi^2}{4}. \label{eq:mu2eps}
  \end{align}
\end{theorem}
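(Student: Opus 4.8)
The plan is to read off $\mu_1(1+x+y)$ and $\mu_2(1+x+y)$ from the Leibniz convolution \eqref{eq:mun-sum}, namely $\mu_n=\frac{\sqrt3}{2\pi}\sum_{k=0}^n\binom nk\alpha_k\beta_{n-k}$, feeding in the $\alpha_k$ recorded in \eqref{eq:eps0}--\eqref{eq:eps2} together with the first few $\beta_k$. First I would compute $\beta_0,\beta_1,\beta_2$ directly from \eqref{eq:beta-def} by logarithmic differentiation of $3^{1+\varepsilon}\,\Gamma(1+\varepsilon/2)^2/\Gamma(2+\varepsilon)$: expressing the derivatives of its logarithm through $\psi$ and $\psi'$ and using $\psi(1)-\psi(2)=-1$, $\psi'(1)=\tfrac{\pi^2}6$, $\psi'(2)=\tfrac{\pi^2}6-1$ yields $\beta_0=3$, $\beta_1=3(\log3-1)$ and $\beta_2=\tfrac32\bigl(1-\tfrac{\pi^2}{12}+(\log3-1)^2\bigr)$. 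Since these $\beta_k$ involve only $\log3$ and powers of $\pi$, all the genuinely polylogarithmic content of the answers must come from the $\alpha_k$.

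For $\mu_1$ this is immediate: $\mu_1=\frac{\sqrt3}{2\pi}(\alpha_0\beta_1+\alpha_1\beta_0)$, and substituting $\alpha_0=\frac{2\pi}{3\sqrt3}$ together with \eqref{eq:eps1} the two $\pi\log3$ contributions cancel against each other and the two bare $\pi$ contributions likewise cancel, leaving exactly $\frac{\sqrt3}{2\pi}\cdot\frac{2}{\sqrt3}\Ls{2}{\frac\pi3}=\frac1\pi\Ls{2}{\frac\pi3}$.

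For $\mu_2=\frac{\sqrt3}{2\pi}(\alpha_0\beta_2+2\alpha_1\beta_1+\alpha_2\beta_0)$ the same idea applies but the bookkeeping is the real content. After substitution I would group the resulting terms by transcendental type: the $\Ls{2}{\frac\pi3}$ terms (from $2\alpha_1\beta_1$ and from the $(1-\log3)\Ls{2}{\frac\pi3}$ piece of \eqref{eq:eps2}), the $(\log3-1)^2$ terms (from $\alpha_0\beta_2$, $2\alpha_1\beta_1$ and \eqref{eq:eps2}), and the weight-three values $\Ls{3}{\frac\pi3}$, $\Ls{3}{\frac{2\pi}{3}}$, $\Ls{3}{\pi}$. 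The key claims to verify are that the $\Ls{2}{\frac\pi3}$ coefficients sum to zero and that the $(\log3-1)^2$ coefficients sum to zero, so that no $\log3$-dependence and no weight-two Clausen value survives. Then $\Ls{3}{\pi}=-\frac{\pi^3}{12}$ from Example \ref{ex:pi} and the closed form $\Ls{3}{\frac\pi3}=-\frac{7\pi^3}{108}$ (a reduction available from \cite{logsin1}) turn those two values into rational multiples of $\pi^3$; combined with the bare $\pi^3$ contributions and divided by $\pi$ they collapse to $\frac{\pi^2}4$, while $\Ls{3}{\frac{2\pi}{3}}$, which admits no such reduction, is left with coefficient $\frac3\pi$.

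The main obstacle is precisely this cancellation bookkeeping: many terms carry mixed weights (bare $\pi$, $\pi\log3$, $\pi\log^2 3$, $\pi^3$, as well as $\Ls{2}{\frac\pi3}$ and $\log3\,\Ls{2}{\frac\pi3}$), and one must track them — in particular the factors of two attached to the log-sine values in \eqref{eq:eps2} — to confirm that the $\Ls{2}{\frac\pi3}$ and $(\log3-1)^2$ contributions vanish. The only genuinely non-elementary input beyond \eqref{eq:mun-sum} and the $\beta_k$ is the reduction of $\Ls{3}{\frac\pi3}$ (and the tabulated $\Ls{3}{\pi}$) to a multiple of $\pi^3$, which is what isolates $\Ls{3}{\frac{2\pi}{3}}$ as the sole surviving irreducible constant.
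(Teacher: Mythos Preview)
Your approach is identical to the paper's, and your values $\beta_0=3$, $\beta_1=3(\log3-1)$, $\beta_2=\tfrac32\bigl(1-\tfrac{\pi^2}{12}+(\log3-1)^2\bigr)$ are correct; for $\mu_1$ everything goes through as you describe.

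There is, however, one concrete error that would derail the $\mu_2$ computation: the convolution \eqref{eq:mun-sum} as printed is a misprint. Since $W_3(s)=\sum_n\mu_n\,s^n/n!$ is an \emph{exponential} generating function while the $\alpha_n$ and $\beta_n$ of \eqref{eq:alphadef} and \eqref{eq:beta-def} are ordinary Taylor coefficients, the correct relation is
\[
  \mu_n\;=\;\frac{\sqrt3}{2\pi}\,n!\sum_{k=0}^n \alpha_k\,\beta_{n-k},
\]
not the binomial version you quote. The paper's own proof silently uses the right formula: the displayed coefficient $2$ in front of $\alpha_2$ matches $2!\cdot\beta_0/3=2$, not $\binom{2}{2}\beta_0/3=1$. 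If you carry out your bookkeeping with $\alpha_0\beta_2+2\alpha_1\beta_1+\alpha_2\beta_0$ as written, the $\Ls{2}{\pi/3}$ contributions do \emph{not} cancel --- the piece from $2\alpha_1\beta_1$ is twice the piece from $\alpha_2\beta_0$ rather than equal and opposite. With the correct weights $2(\alpha_0\beta_2+\alpha_1\beta_1+\alpha_2\beta_0)$ they cancel exactly, and the remainder of your plan, including the final collapse $3\Ls{3}{\pi/3}-6\Ls{3}{\pi}-\pi^3/18=\pi^3/4$ (using Example~\ref{ex:pi} and Example~\ref{ex:lspi3}), matches the paper line for line.
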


\begin{proof}
  Using Theorem \ref{thm:W3} we obtain
  \begin{align}
    \mu_1(1+x+y) = \frac{3\sqrt{3}}{2\pi} \left[ (\log3-1)\alpha_0 + \alpha_1 \right].
  \end{align}
  Combining this with equations \eqref{eq:eps0}, \eqref{eq:eps1} yields (\ref{eq:mu1eps}).

  Again, using Theorem \ref{thm:W3} we obtain
  \begin{align}
    \mu_2(1+x+y) = \frac{3\sqrt{3}}{2\pi} \left[ (\log^2 3 - 2\log3 + 2 - \tfrac{\pi^2}{12})\alpha_0
    + 2(\log3 - 1) \alpha_1 + 2 \alpha_2 \right].
  \end{align}
  Combining this with equations  \eqref{eq:eps0}, \eqref{eq:eps1}, and \eqref{eq:eps2} yields
  \begin{align}
    \pi\mu_2(1+x+y)
    &= 3\Ls{3}{\frac{2\pi}{3}} + 3\Ls{3}{\frac\pi3} - 6\Ls{3}{\pi} - \frac{\pi^3}{18} \nonumber\\
    &= 3\Ls{3}{\frac{2\pi}{3}} + \frac{\pi^3}{4}.
  \end{align}
  The last equality follows, for instance, automatically from the results in
  \cite{logsin-evaluations}.
\end{proof}

\begin{example}[$\alpha_3$]
  The evaluation of $\alpha_3$ is more involved and we omit some
  details. Again, \eqref{eq:hypeps} produces
  \begin{align*}
    [\varepsilon^3] \;
    \pFq32{\frac{\varepsilon+2}2, \frac{\varepsilon+2}2,\frac{\varepsilon+2}2}{1,\frac{\varepsilon+3}2}{\frac14}
    &= -\frac13 \sum_{j=1}^\infty \frac{1}{j} \frac{1}{\binom{2j}{j}}
    \left[ A_{1,j}^3 + 3A_{1,j}A_{2,j} + 2A_{3,j} \right].
  \end{align*}
  Using \cite[(2.25)-(2.28),(2.68)-(2.70),(2.81),(2.89)]{davydychev-bin} as well as results from \cite{logsin-evaluations} we obtain
  \begin{align}\label{eq:eps3}
    \alpha_3
    &= \frac{2}{3\sqrt{3}} \left[ \frac{5\pi^3}{108}(1-\log3) + \frac12 \pi\log^2 3
    - \frac16 \pi \log^3 3 + \frac{11}{9} \pi \zeta(3) \right. \nonumber\\
    &\quad + \Cl{2}{\frac\pi3} \left( \frac{5}{36} \pi^2 - \log3 + \frac12 \log^2 3 \right)
    - 3 \Gl{2,1}{\frac{2\pi}{3}} \left( 1 - \log3 \right) \nonumber\\
    &\quad\left.- \frac{35}{6} \Cl{4}{\frac\pi3} + 15 \Cl{2,1,1}{\frac{2\pi}{3}}
    - 3 \Lsc{2}{3}{\frac\pi3} \right].
  \end{align}
  Observe the occurrence of the log-sine-cosine integral
  $\Lsc{2}{3}{\frac\pi3}$. The log-sine-cosine integrals were defined
  in \eqref{eq:deflsc}.
  \qede
\end{example}

Proceeding as in the proof of Theorem \ref{thm:mu21eps} we obtain:

\begin{theorem}[Evaluation of $\mu_3(1+x+y)$]\label{thm:mu3eps}
  We have
  \begin{align}\label{eq:mu3eps}
    \pi\mu_3(1+x+y)
    &= 15\Ls{4}{\frac{2\pi}{3}} - 18\Lsc{2}{3}{\frac\pi3} - 15\Cl{4}{\frac\pi3} \nonumber\\
    &\quad- \frac14\pi^2\Cl{2}{\frac\pi3} - 17\pi\zeta(3).
  \end{align}
\end{theorem}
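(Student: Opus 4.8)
The plan is to mirror exactly the proof strategy of Theorem~\ref{thm:mu21eps}, applying Theorem~\ref{thm:W3} together with the $\varepsilon$-expansion coefficients $\alpha_n$ and $\beta_n$. Concretely, I would start from the generating-function identity \eqref{eq:hyper}, which expresses $W_3(s)=\sum_n \mu_n(1+x+y)\,s^n/n!$ as the product $\frac{\sqrt3}{2\pi}\cdot 3^{s+1}\frac{\Gamma(1+s/2)^2}{\Gamma(s+2)}\cdot {}_3F_2(\cdots)$. Writing the hypergeometric factor as $\sum_k \alpha_k\varepsilon^k$ and the Gamma-prefactor as $\sum_k\beta_k\varepsilon^k$ (see \eqref{eq:beta-def}), Leibniz' rule \eqref{eq:mun-sum} gives
\begin{equation*}
  \mu_3(1+x+y)=\frac{\sqrt3}{2\pi}\sum_{k=0}^3\binom{3}{k}\alpha_k\beta_{3-k}
  =\frac{\sqrt3}{2\pi}\bigl(\alpha_0\beta_3+3\alpha_1\beta_2+3\alpha_2\beta_1+\alpha_3\beta_0\bigr).
\end{equation*}

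Next I would assemble the explicit ingredients. The coefficients $\beta_0,\beta_1,\beta_2,\beta_3$ are elementary: by the Remark following \eqref{eq:beta-def} they are polynomials in $\log 3$ with coefficients built from $\gamma_n$, hence from the values $\Ls{k+1}{\pi}$ of Example~\ref{ex:pi}, so each $\beta_{3-k}$ is a combination of $\log 3$, $\pi^2$, and $\zeta(3)$. The $\alpha_k$ are supplied directly by the excerpt: $\alpha_0$ in \eqref{eq:eps0}, $\alpha_1$ in \eqref{eq:eps1}, $\alpha_2$ in \eqref{eq:eps2}, and $\alpha_3$ in \eqref{eq:eps3}. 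I would substitute all of these into the displayed combination and collect terms. The prefactor $\frac{\sqrt3}{2\pi}$ cancels against the $\frac{2}{3\sqrt3}$ appearing uniformly in the $\alpha_k$, leaving an overall $1/\pi$, which accounts for the $\pi\mu_3$ on the left of \eqref{eq:mu3eps}.

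The substantive work is the simplification after substitution. Before collecting, the raw expression will contain many $\log 3$-laden terms, the depth-two and depth-three objects $\Gl{2,1}{2\pi/3}$, $\Cl{2,1,1}{2\pi/3}$, $\Ls{3}{\pi/3}$, $\Ls{3}{2\pi/3}$, $\Ls{3}{\pi}$, and $\Cl{2}{\pi/3}$, alongside $\Cl{4}{\pi/3}$, $\Lsc{2}{3}{\pi/3}$, and the target $\Ls{4}{2\pi/3}$. The main obstacle is therefore twofold: first, I expect all transcendental-weight-inappropriate and all $\log 3$ contributions to cancel, which is the crucial consistency check that $\mu_3$ is a pure period; verifying this cancellation is the bulk of the bookkeeping. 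Second, and harder, the multiple polylogarithms $\Gl{2,1}{2\pi/3}$ and $\Cl{2,1,1}{2\pi/3}$ must be reduced to the classical constants appearing in \eqref{eq:mu3eps}. This is exactly where I would invoke ``results from \cite{logsin-evaluations}'' as cited in the derivation of $\alpha_3$, i.e.\ the reduction identities for weight-four log-sine and related integrals at $\pi/3$ and $2\pi/3$; in particular relating $\Ls{4}{2\pi/3}$, $\Cl{4}{\pi/3}$, and $\Lsc{2}{3}{\pi/3}$ to the depth-two/three polylogarithms. I anticipate that once those reductions are applied, the $\Gl{2,1}$ and $\Cl{2,1,1}$ terms recombine into the stated linear combination of $\Ls{4}{2\pi/3}$, $\Lsc{2}{3}{\pi/3}$, $\Cl{4}{\pi/3}$, $\pi^2\Cl{2}{\pi/3}$, and $\pi\zeta(3)$, yielding \eqref{eq:mu3eps}. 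Given the depth of these reductions, I would in practice corroborate the final numerical identity to high precision as an independent check, exactly in the spirit of the automatic verifications the authors cite.
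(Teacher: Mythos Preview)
Your proposal is correct and follows precisely the route the paper takes: the paper's entire proof is the one-line remark ``Proceeding as in the proof of Theorem~\ref{thm:mu21eps} we obtain,'' and what you have written is exactly the unpacking of that instruction---combine $\alpha_0,\dots,\alpha_3$ from \eqref{eq:eps0}--\eqref{eq:eps3} with $\beta_0,\dots,\beta_3$ via \eqref{eq:mun-sum}, then simplify using the reductions from \cite{logsin-evaluations} (in particular the relation expressing $\Ls{4}{2\pi/3}$ in terms of $\Cl{2,1,1}{2\pi/3}$, recorded in the Remark after Conjecture~\ref{thm:mu3f2}, and the companion identity for $\Gl{2,1}{2\pi/3}$).
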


The log-sine-cosine integral $\Lsc{2}{3}{\frac\pi3}$ appears to reduce further as
\begin{align}
  12\Lsc{2}{3}{\frac\pi3}
  &\stackrel{?[1]}{=} 6\Ls{4}{\frac{2\pi}{3}} - 4\Cl{4}{\frac\pi3} - 7\pi\zeta(3) \\
  &= 6\Ls{4}{\frac{2\pi}{3}} - \frac89\Ls{4}{\frac\pi3} - \frac{59}{9}\pi\zeta(3). \label{eq:lsc23}
\end{align}
This conjectural reduction also appears in \cite[(A.30)]{dk1-eps} where it was
found via PSLQ.  Combining this with \eqref{eq:mu3eps}, we obtain an conjectural
evaluation of $\mu_3(1+x+y)$ equivalent to \eqref{eq:mu3ls}.

On the other hand, it follows from \cite[(2.18)]{davydychev-bin} that
\begin{align}
  12\Lsc{2}{3}{\frac\pi3}
  &= \Ls{4}{\frac{2\pi}{3}} - 4\Ls{4}{\frac\pi3} - \frac{1}{12}\pi\log^3 3 \nonumber\\
  &\quad+ 24\Ti{4}{\frac{1}{\sqrt{3}}} + 12\log3 \Ti{3}{\frac{1}{\sqrt{3}}} + 3\log^2 3 \Ti{2}{\frac{1}{\sqrt{3}}}.
\end{align}
Using the known evaluations --- see for instance \cite[(76),(77)]{logsin1} --- for
the inverse tangent integrals of order two and three, we find that
\eqref{eq:lsc23} is equivalent to
\begin{align}\label{eq:lsc23ti}
  \Ti{4}{\frac{1}{\sqrt{3}}}
  &\stackrel{?[1]}{=} \frac{5}{24}\Ls{4}{\frac{2\pi}{3}} + \frac{7}{54}\Ls{4}{\frac\pi3}
  - \frac{59}{216}\pi\zeta(3) - \frac{1}{288}\pi\log^3 3 \nonumber\\
  &\quad- \frac12\log3 \Ti{3}{\frac{1}{\sqrt{3}}} - \frac18\log^2 3 \Ti{2}{\frac{1}{\sqrt{3}}}.
\end{align}

\section[Trigonometric analysis of mun(1+x+y)]{Trigonometric analysis of $\mu_n(1+x+y)$}\label{sec:mun}

As promised in \cite{logsin1} --- motivated by the development outlined above
--- we take the analysis of $\mu_n(1+x+y)$ for $n \ge 3$ a fair distance. In
light of \eqref{eq:mu-trigintegral} we define
\begin{align}\label{eq:rho-n}
  \rho_n(\alpha) &:= \frac1{2\pi} \int_{-\pi}^{\pi}
    \left(\Re\log\left(1-\alpha\,\e^{i\,\omega}\right)\right)^n \id\omega
\end{align}
for $n \ge 0$ so that
\begin{align}\label{eq:mu-rhointegral}
  \mu_n(1+x+y) &= \frac{1}{2\pi} \int_{-\pi}^{\pi}\,\rho_n(|2 \sin \theta|) \id\theta.
\end{align}
We thus typically set $\alpha = |2\sin\theta|$. Note that $\rho_0(\alpha)=1$,
$\rho_1(\alpha)=\log(|\alpha| \vee 1))$.

\begin{proposition}[Properties of $\rho_n$]\label{prop:rho}
  Let $n =1,2, \ldots$.
  \begin{enumerate}[(a)]
    \item\label{prop:ra} For $|\alpha| \le 1$ we have
      \begin{align} \label{eq:zn}
        \rho_n(\alpha)  &=(-1)^n \sum_{m=1}^\infty\,\frac{\alpha^m}{m^n} \,\omega_n(m),
      \end{align}
      where $\omega_n$ is defined as
      \begin{align}\label{eq:cn}
        \omega_n(m) &=\sum_{\sum_{j=1}^n k_j=m}\,\frac{1}{2\pi}\,
        \int_{-\pi}^{\pi}\, \prod_{j=1}^n\,\frac{m}{k_j}\cos (k_j\omega) \id \omega.
      \end{align}
    \item\label{prop:rb} For $|\alpha| \ge 1$ we have
      \begin{align} \label{eq:mun}
        \rho_n(\alpha) &= \sum_{k=0}^n \binom{n}{k} \log^{n-k}|\alpha|\,\rho_k\left(\frac 1 \alpha \right).
      \end{align}
  \end{enumerate}
\end{proposition}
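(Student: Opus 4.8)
The plan is to prove both parts by inserting, respectively, the power series and the factorization of $1-\alpha\e^{i\omega}$ directly into the defining integral \eqref{eq:rho-n}, and in each case using only the elementary fact that $\Re\log w=\log|w|$ for $w\neq0$.

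For part~(a) I would first record that, for real $\alpha$ with $|\alpha|<1$, the Maclaurin series of the logarithm gives
\[ \Re\log(1-\alpha\e^{i\omega}) = -\sum_{k\ge1}\frac{\alpha^k\cos(k\omega)}{k}, \]
a series converging absolutely and uniformly in $\omega$, since it is dominated by $\sum_k|\alpha|^k/k$. Raising this to the $n$-th power and expanding as an $n$-fold sum over indices $k_1,\dots,k_n\ge1$ produces the typical term $(-1)^n\,\alpha^{k_1+\cdots+k_n}(k_1\cdots k_n)^{-1}\prod_{j=1}^n\cos(k_j\omega)$. Absolute convergence of the whole multiple series, which is bounded by $(\sum_k|\alpha|^k/k)^n$, justifies integrating term by term against $\frac1{2\pi}\id\omega$ and then collecting terms by the value of $m:=k_1+\cdots+k_n$. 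Since $(k_1\cdots k_n)^{-1}=m^{-n}\prod_{j=1}^n(m/k_j)$, the resulting inner sum over compositions $\sum_j k_j=m$ is exactly $\omega_n(m)/m^n$ as defined in \eqref{eq:cn}, which yields \eqref{eq:zn}. The endpoint $|\alpha|=1$ would then be recovered by Abel's limit theorem together with the continuity of $\rho_n$ in $\alpha$.

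For part~(b) the key observation is the exact algebraic identity $1-\alpha\e^{i\omega}=-\alpha\e^{i\omega}\bigl(1-\tfrac1\alpha\e^{-i\omega}\bigr)$. Taking moduli and then logarithms, and using once more that $\Re\log=\log|\cdot|$ so that no branch of the logarithm is ever needed, gives
\[ \Re\log(1-\alpha\e^{i\omega}) = \log|\alpha| + \Re\log\bigl(1-\tfrac1\alpha\e^{-i\omega}\bigr). \]
I would then raise this to the $n$-th power, apply the binomial theorem, and integrate term by term over $[-\pi,\pi]$. The substitution $\omega\mapsto-\omega$ leaves both the interval and $\frac1{2\pi}\id\omega$ invariant, so each integral $\frac1{2\pi}\int_{-\pi}^{\pi}\bigl(\Re\log(1-\tfrac1\alpha\e^{-i\omega})\bigr)^k\id\omega$ equals $\rho_k(1/\alpha)$; collecting the binomial coefficients then gives \eqref{eq:mun}.

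The genuinely delicate step is the analytic bookkeeping in part~(a): one must justify the interchange of the $n$-fold summation with the integral and, above all, handle the boundary $|\alpha|=1$, where the underlying logarithm series converges only conditionally and the dominated-convergence argument valid for $|\alpha|<1$ no longer applies directly. By contrast, part~(b) is essentially immediate once the factorization is spotted; the one point that might seem to require care, namely the multivaluedness of $\log(-\alpha\e^{i\omega})$, evaporates because only the real part enters and $\Re\log$ is single-valued.
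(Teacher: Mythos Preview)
Your proposal is correct and follows essentially the same route as the paper's own proof: for part~(a) one expands $\Re\log(1-\alpha\e^{i\omega})$ as the cosine series, raises to the $n$-th power, and collects terms by $m=\sum_j k_j$; for part~(b) one writes $\log|1-\alpha\e^{i\omega}|=\log|\alpha|+\log|1-\alpha^{-1}\e^{\pm i\omega}|$ and applies the binomial theorem. If anything, your version is slightly more careful than the paper's, which does not explicitly discuss the boundary case $|\alpha|=1$ beyond the crude bound $|\omega_n(m)|\le m^n$.
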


\begin{proof}
  For part \eqref{prop:ra} we use \eqref{eq:rho-n} to write
  \begin{align*}
    \rho_n(\alpha)
    &= \frac1{2\pi} \int_{-\pi}^{\pi}\, \left(\Re\,\log\left(1-\alpha\e^{i\,\omega}\right)\right)^n\id \omega\\
    &=\frac1{2\pi} \int_{-\pi}^{\pi}\, \left\{-\sum_{ k \ge 1}\frac{\alpha^k}{k}\,\cos (k\omega) \right\}^n\id \omega\\
    &=(-1)^n\sum_{m=1}^\infty\,\frac{\alpha^m}{m^n} \,\omega_n(m),
  \end{align*}
  as asserted. We note that $|\omega_n(m)| \le m^n$ and so the sum is convergent.

  For part \eqref{prop:rb} we now use \eqref{eq:rho-n} to write
  \begin{align*}
    \rho_n(\alpha)
    &= \frac{1}{2\pi} \int_{-\pi}^{\pi}\,\log^n \left(|\alpha|\,\left|1-\alpha^{-1}\e^{i\,\omega}\right| \right)\id\omega\\
    &= \frac{1}{2\pi} \int_{-\pi}^{\pi}\,\left( \log |\alpha|+\log\left|1-\alpha^{-1}\e^{i\,\omega}\right|\right)^n \id\omega\\
    &= \sum_{k=0}^n \binom{n}{k} \log^{n-k}|\alpha|\,\frac{1}{2\pi} \int_{-\pi}^{\pi}\,\log^k\left|1-\alpha^{-1}\e^{i\,\omega}\right| \id\omega\\
    &= \sum_{k=0}^n \binom{n}{k} \log^{n-k}|\alpha|\,\rho_k\left(\frac 1 \alpha \right),
  \end{align*}
  as required.
\end{proof}

\begin{example}[Evaluation of $\omega_n$ and $\rho_n$ for $n \le 2$]\label{ex:rho2}
  We have $\omega_0(m)= 0$, $\omega_1(m) = \delta_0(m)$, and
  \begin{align}\label{eqn:omega-ex}
    \omega_2(0) =1,\quad
    \omega_2(2m) = 2,\quad
    \omega_2(2m+1) = 0.
  \end{align}
  Likewise, $\rho_0(\alpha)= 1$, $\rho_1(\alpha) = \log\left(|\alpha| \vee 1\right)$, and
  \begin{align}
    \rho_2(\alpha)&= \frac12\Li_2(\alpha^2),  \qquad \mbox {for~} |\alpha| \le 1, \label{eqn:rho2}\\
    &= \frac12\Li_2\left(\frac{1}{\alpha^2}\right)+\log^2 |\alpha|,\qquad \mbox {for~} |\alpha| \ge 1,\label{eqn:rho-ex}
  \end{align}
  where the case $n=2$ follows from \eqref{eqn:omega-ex} and Proposition
  \ref{prop:rho}.
  \qede
\end{example}

We have arrived at an effective description of $\mu_n(1+x+y)$:

\begin{theorem}[Evaluation of $\mu_n(1+x+y)$]\label{thm:mun}
  Let $n =1,2, \ldots$. Then
  \begin{align}\label{eq:mugen}
    \mu_n(1+x+y) &= \frac{1}{\pi}\left\{ \Ls{n+1}{\frac{\pi}{3}}-\Ls{n+1}{\pi} \right\}
    + \frac{2}{\pi} \int_{0}^{\pi/6} \rho_n\left(2\sin\theta\right) \id\theta \nonumber\\
    &\quad+ \frac{2}{\pi} \sum_{k=2}^{n} \binom{n}{k} \int_{\pi/6}^{\pi/2}\,
    \log^{n-k}\,(2\sin\theta)\,\rho_k\left(\frac 1 {2\sin \theta} \right)\md \theta.
  \end{align}
\end{theorem}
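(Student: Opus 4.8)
The plan is to start from the single-integral representation \eqref{eq:mu-rhointegral} and exploit the symmetries of the integrand $\rho_n(|2\sin\theta|)$. Since $\theta \mapsto |2\sin\theta|$ is even on $[-\pi,\pi]$ and satisfies $|2\sin(\pi-\theta)| = |2\sin\theta|$, the integrand is even and symmetric about $\theta = \pi/2$, so that
\begin{equation*}
  \mu_n(1+x+y) = \frac{1}{2\pi}\int_{-\pi}^{\pi} \rho_n(|2\sin\theta|)\id\theta = \frac{2}{\pi}\int_0^{\pi/2}\rho_n(2\sin\theta)\id\theta,
\end{equation*}
where on $[0,\pi/2]$ the modulus is dropped since $\sin\theta \ge 0$.

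The key observation is that $2\sin\theta$ crosses the value $1$ precisely at $\theta = \pi/6$, which is exactly where the two branches of Proposition \ref{prop:rho} meet. First I would split the integral at $\pi/6$, leaving the lower piece $\frac{2}{\pi}\int_0^{\pi/6}\rho_n(2\sin\theta)\id\theta$ untouched (this becomes the second term of \eqref{eq:mugen}). On the upper piece $[\pi/6,\pi/2]$ we have $2\sin\theta \ge 1$, so I would apply part \eqref{prop:rb} of Proposition \ref{prop:rho} with $\alpha = 2\sin\theta$ to obtain
\begin{equation*}
  \rho_n(2\sin\theta) = \sum_{k=0}^{n}\binom{n}{k}\log^{n-k}(2\sin\theta)\,\rho_k\!\left(\frac{1}{2\sin\theta}\right).
\end{equation*}
Two of these terms are special: the $k=0$ term equals $\log^n(2\sin\theta)$ since $\rho_0 \equiv 1$, while the $k=1$ term vanishes because $\rho_1(\alpha) = \log(|\alpha|\vee 1)$ and here $1/(2\sin\theta) \in [\tfrac12,1]$, whence $\rho_1(1/(2\sin\theta)) = 0$. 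The surviving terms with $k \ge 2$ reproduce the final sum in \eqref{eq:mugen} verbatim.

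It remains to convert the leading contribution $\frac{2}{\pi}\int_{\pi/6}^{\pi/2}\log^n(2\sin\theta)\id\theta$ into log-sine integrals. Substituting $u = 2\theta$ turns this into $\frac{1}{\pi}\int_{\pi/3}^{\pi}\log^n|2\sin(u/2)|\id u$, and by the very definition \eqref{eq:slx} of $\Ls{n+1}{\cdot}$ (whose derivative in the upper limit is $-\log^n|2\sin(\cdot/2)|$) this equals $\frac{1}{\pi}\{\Ls{n+1}{\pi/3} - \Ls{n+1}{\pi}\}$, matching the first term of \eqref{eq:mugen}. Collecting the three pieces then yields the claimed formula.

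I expect no serious obstacle here: once Proposition \ref{prop:rho} is in hand, the result is essentially an exercise in splitting the domain at the branch point $\theta = \pi/6$ followed by bookkeeping. The only points demanding care are verifying that the $k=1$ term genuinely drops out---which holds because $1/(2\sin\theta) \le 1$ throughout $[\pi/6,\pi/2]$, forcing $\rho_1(1/(2\sin\theta)) = 0$---and tracking the endpoints $\pi/3$ and $\pi$ through the half-angle substitution so that the log-sine integrals emerge with the correct arguments.
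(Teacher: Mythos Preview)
Your proposal is correct and follows essentially the same approach as the paper: start from \eqref{eq:mu-rhointegral}, reduce by symmetry to $\frac{2}{\pi}\int_0^{\pi/2}$, split at $\theta=\pi/6$, apply Proposition~\ref{prop:rho}\eqref{prop:rb} on the upper piece, drop the $k=1$ term because $\rho_1(1/(2\sin\theta))=0$, and identify the $k=0$ term with the log-sine difference. You are in fact slightly more explicit than the paper about the half-angle substitution $u=2\theta$ that produces $\Ls{n+1}{\pi/3}-\Ls{n+1}{\pi}$, which the paper simply asserts.
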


\begin{proof}
  Since $|\alpha| <1$ exactly when $|\theta| < \pi/6$ we start with
  \eqref{eq:mu-rhointegral} to get
  \begin{align*}
    \mu_n(1+x+y) &= \frac{1}{2\pi} \int_{-\pi}^{\pi}\,\rho_n(|2 \sin \theta|) \id\theta \\
    &= \frac{2}{\pi} \int_{0}^{\pi/6}\,\rho_n(2 \sin \theta) \id\theta
    + \frac{2}{\pi} \int_{\pi/6}^{\pi/2}\,\rho_n(2 \sin \theta) \id\theta \\
    &= \frac{2}{\pi} \int_{0}^{\pi/6}\,\rho_n(2 \sin \theta) \id\theta \\
    &\quad+ \sum_{k=0}^n \binom{n}{k} \frac{2}{\pi} \int_{\pi/6}^{\pi/2}
    \log^{n-k}(2\sin \theta)\,\rho_k\left(\frac 1 {2\sin \theta} \right) \id\theta.
  \end{align*}
  We observe that for $k=1$ the contribution is zero since $\rho_1$ is zero
  for $|\alpha|<1$.  After evaluating the term with $k=0$ we arrive at
  \eqref{eq:mugen}.
\end{proof}

\begin{remark}\label{rem:sasaki}
  As is shown in \cite{logsin1},
  \begin{equation*}
    \frac{1}{\pi}\left\{ \Ls{n+1}{\frac{\pi}{3}} - \Ls{n+1}{\pi} \right\} = \mu_n(1+x+y_*)
  \end{equation*}
  is a multiple Mahler measure and is recursively evaluable for all $n$.
  \qede
\end{remark}

We note the occurrence of values at $\pi/3$ and so record the following:

\begin{example}[Values of $\Ls{n}{\pi/3}$]\label{ex:lspi3}
  The following evaluations may be obtained with the help of the implementation accompanying
  \cite{logsin-evaluations}.\footnote{These are available for download from \url{http://arminstraub.com/pub/log-sine-integrals}}
  \begin{align*}
    \Ls{2}{\frac\pi3} &= \mcl{2} \\
    -\Ls{3}{\frac\pi3} &= \frac{7}{108}\, \pi^3 \\
    \Ls{4}{\frac\pi3} &= \frac12\pi\,\zeta(3)+\frac 92\,\mcl{4} \\
    -\Ls{5}{\frac\pi3} &= \frac{1543}{19440}\pi^5 - 6\mgl{4,1} \\
    \Ls{6}{\frac\pi3} &= \frac{15}2\pi \,\zeta(5) + \frac{35}{36}\,\pi^3
      \zeta(3) + \frac{135}{2}\,\mcl{6} \\
    -\Ls{7}{\frac\pi3} &= \frac{74369}{326592}\pi^7 + \frac{15}{2}\,\pi
      \zeta(3)^2 - 135\,\mgl{6,1} \\
    \Ls{8}{\frac\pi3} &= \frac{13181}{2592}\pi^5\zeta(3) + \frac{1225}{24}\pi^3\zeta(5)
      + \frac{319445}{864}\pi\zeta(7) \\
      &\quad+ \frac{35}{2}\pi^2\mcl{6}
      + \frac{945}{4}\mcl{8} + 315\mcl{6,1,1}
  \end{align*}\qede
  \end{example}

\subsection[Further evaluation of rho]{Further evaluation of $\rho_n$}\label{ssec:rhon}

To make further progress, we need first to determine $\rho_n$ for $n \ge 3$.
It is instructive to explore the next few cases.

\begin{example}[Evaluation of $\omega_3$ and $\rho_3$]\label{ex:rho3}
  We use{\small \[4\,\cos \left( a \right) \cos \left( b \right) \cos \left( c \right) =
  \cos \left( a+b+c \right) +\cos \left( a-b-c \right) +\cos \left( a-b+
  c \right) +\cos \left( a-c+b \right)\]}
  and so derive
  \begin{align*}
    \omega_3(m) &= \frac 14\,\sum\, \left \{\frac{m^3}{ijk} \colon i\pm j\pm k =0, i+j+k=m\right\}.
  \end{align*}
  Note that we must have exactly one of $i=j+k, j=k+i$ or $k=i+j$.
  We thus learn that $\omega_3(2m+1)=0$. Moreover, by symmetry,
  \begin{align}\label{eq:omega3}
    \omega_3(2m) &= \frac34\,\sum_{j+k=m} \frac{(2m)^3}{jk(j+k)} \nonumber\\
    &= 6\,\sum_{j+k=m} \frac{m^2}{jk} = 12\,m\sum_{k=1}^{m-1} \frac 1k .
  \end{align}
  Hence, by Proposition \ref{prop:rho},
  \begin{align}\label{eq:rho3}
    \rho_3(\alpha) &= -\frac32\,\sum_{m=1}^\infty \frac{\sum_{k=1}^{m-1} \frac 1k}{m^2}\, \alpha^{2m}
    = -\frac32\,\Li_{2,1}(\alpha^2)
  \end{align}
  for $|\alpha|<1$.
\qede
\end{example}

\subsection[A general formula for rho]{A general formula for $\rho_n$}

In the general case we have
\begin{equation}\label{eq:prodcos}
  \prod_{j=1}^n \cos(x_j) = 2^{-n} \sum_{\varepsilon\in\{-1,1\}^n}
    \cos\left( \sum_{j=1}^n \varepsilon_j x_j \right)
\end{equation}
which follows inductively from $2\cos(a)\cos(b) = \cos(a+b)+\cos(a-b)$.

\begin{proposition}\label{prop:omegaodd}
  For integers $n,m\ge0$ we have $\omega_n(2m+1) = 0$.
\end{proposition}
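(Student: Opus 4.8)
The plan is to fix an odd argument $M=2m+1$ and to show that \emph{each} term of the finite sum defining $\omega_n(M)$ in \eqref{eq:cn} already vanishes. Since the prefactor $\prod_{j=1}^n (M/k_j)$ is a nonzero constant for every composition $k_1+\cdots+k_n=M$ into positive parts, it suffices to prove that
\[
  \frac{1}{2\pi}\int_{-\pi}^{\pi}\prod_{j=1}^n \cos(k_j\omega)\id\omega = 0
\]
whenever $k_1+\cdots+k_n$ is odd.

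First I would apply the product-to-sum identity \eqref{eq:prodcos} with $x_j=k_j\omega$, turning the integrand into $2^{-n}\sum_{\varepsilon\in\{-1,1\}^n}\cos\!\bigl(\sum_{j}\varepsilon_j k_j\,\omega\bigr)$. Integrating term by term and using the orthogonality relation $\frac{1}{2\pi}\int_{-\pi}^{\pi}\cos(N\omega)\id\omega = \delta_{N,0}$ for integer $N$, the integral collapses to $2^{-n}$ times the number of sign vectors $\varepsilon$ with $\sum_{j=1}^n\varepsilon_j k_j=0$. The key observation is then a parity argument: if such an $\varepsilon$ exists, then
\[
  \sum_{j=1}^n k_j = \sum_{j=1}^n k_j - \sum_{j=1}^n \varepsilon_j k_j = 2\!\!\sum_{j:\,\varepsilon_j=-1}\!\! k_j
\]
is even. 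Hence when $\sum_j k_j=M$ is odd no cancelling sign vector can exist, the count is $0$, and the integral vanishes; summing over the finitely many compositions of $M$ yields $\omega_n(M)=0$.

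There is no analytic obstacle here: for fixed $M$ the sum in \eqref{eq:cn} ranges over the finitely many compositions of $M$ into $n$ positive parts, so the term-by-term integration is unconditional and no convergence question arises. The only real content is spotting the parity identity above. As a sanity check, the same conclusion follows even more directly from the substitution $\omega\mapsto\omega+\pi$: since each $k_j$ is an integer, $\prod_j\cos\bigl(k_j(\omega+\pi)\bigr)=(-1)^{\sum_j k_j}\prod_j\cos(k_j\omega)$, and invariance of the integral over a full period forces it to equal $(-1)^M$ times itself, hence to vanish when $M$ is odd. I would present the first argument in the main text, since it directly reuses \eqref{eq:prodcos} just recorded, and mention the shift argument only as an alternative.
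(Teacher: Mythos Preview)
Your argument is correct and follows essentially the same route as the paper: apply the product formula \eqref{eq:prodcos}, use orthogonality so that only sign vectors with $\sum_j \varepsilon_j k_j=0$ contribute, and then observe that such a sign vector forces $\sum_j k_j$ to be even. The paper phrases the parity step via a subset $S\subset\{1,\ldots,n\}$ with $\sum_{j\in S}k_j=\sum_{j\notin S}k_j$, which is exactly your identity $\sum_j k_j = 2\sum_{j:\varepsilon_j=-1}k_j$; your additional shift argument $\omega\mapsto\omega+\pi$ is a nice alternative not mentioned there.
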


\begin{proof}
  In light of \eqref{eq:prodcos} the summand corresponding to the indices
  $k_1,\ldots,k_n$ in \eqref{eq:cn} for $\omega_n(2m+1) = 0$ is nonzero if and
  only if there exists $\varepsilon\in\{-1,1\}^n$ such that $\varepsilon_1 k_1
  + \ldots + \varepsilon_n k_n = 0$.
  In other words, there is a set $S\subset\{1,\ldots,n\}$ such that
  \[ \sum_{j\in S} k_j = \sum_{j\not\in S} k_j. \]
  Thus $k_1+\ldots+k_n=2\sum_{j\in S} k_j$ which contradicts $k_1+\ldots+k_n=2m+1$.
\end{proof}

\begin{example}[Evaluation of $\omega_4$ and $\rho_4$]\label{ex:rho4}
  Proceeding as in Example \ref{ex:rho3} and employing \eqref{eq:prodcos}, we find
  \begin{align}\label{eq:o4}
    \omega_4(2m) &= \frac38 \sum_{\substack{i+j=m \\ k+\ell=m}} \frac{(2m)^4}{ijk\ell}
    + \frac12 \sum_{i+j+k=m} \frac{(2m)^4}{ijk\ell} \nonumber \\
    &= 24 m^2 \sum_{\substack{i<m \\ j<m}} \frac1{ij}
    + 24 m^2 \sum_{i+j<m} \frac1{ij} \nonumber\\
    &= 48 m^2 \sum_{i=1}^{m-1} \frac1i \sum_{j=1}^{i-1} \frac1j
    + 24 m^2 \sum_{i=1}^{m-1} \frac1{i^2}
    + 48 m^2 \sum_{i=1}^{m-1} \frac1i \sum_{j=1}^{i-1} \frac1j.
  \end{align}
  Consequently, for $|\alpha|<1$ and appealing to Proposition \ref{prop:rho},
  \begin{align}\label{eq:rho4}
    \rho_4(\alpha) = \sum_{m=1}^\infty \frac{\alpha^{2m}}{(2m)^4}\, \omega_4(2m)
     = 6\,\Li_{2,1,1}(\alpha^2) + \frac32\,\Li_{2,2}(\alpha^2).
  \end{align}
  This suggests that $\rho_n(\alpha)$ is generally expressible as a sum of
  polylogarithmic terms, as will be shown next.
  \qede
\end{example}

To help general evaluation of $\omega_n(2m)$, for integers $j\ge0$ and $m\ge1$,
let us define
\begin{equation}\label{eq:defsigma1}
  \sigma_j(m) :=\sum_{m_1+\ldots+m_j= m}\frac{1}{m_1\cdots m_j}.
\end{equation}

\begin{proposition}\label{prop:sigma}
  For positive integers $n$, $m$ we have
  \begin{equation}\label{eq:os}
    \frac{\omega_n(2m) }{m^n}= \sum_{j=1}^{n-1} \binom{n}{j} \sigma_{j}(m)\sigma_{n-j}(m)
  \end{equation}
  where $\sigma_{j}$ is as defined in \eqref{eq:defsigma1}.
\end{proposition}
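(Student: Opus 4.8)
The plan is to expand the integral defining $\omega_n(2m)$ in \eqref{eq:cn} by means of the product-to-sum identity \eqref{eq:prodcos}, carry out the $\omega$-integration to obtain a purely combinatorial count, and then reorganize that count by partitioning the index set into two halves of equal weight.

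First I would set the argument equal to $2m$ in \eqref{eq:cn}, so that the product becomes $\prod_{j=1}^n \frac{2m}{k_j}\cos(k_j\omega)$ and the outer sum runs over positive integers $k_1,\ldots,k_n$ with $k_1+\cdots+k_n = 2m$. Applying \eqref{eq:prodcos} to $\prod_j\cos(k_j\omega)$ and using the elementary fact that $\frac{1}{2\pi}\int_{-\pi}^\pi \cos(N\omega)\id\omega = \delta_{N,0}$, only those sign vectors $\varepsilon\in\{-1,1\}^n$ with $\sum_j \varepsilon_j k_j = 0$ survive. Each such $\varepsilon$ corresponds bijectively to a subset $S = \{j : \varepsilon_j = 1\}$ satisfying $\sum_{j\in S} k_j = \sum_{j\notin S} k_j = m$. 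Collecting the constant $2^{-n}$ from \eqref{eq:prodcos} together with the factor $(2m)^n$ coming from the product yields the clean prefactor $(2m)^n/2^n = m^n$, so that
\[
  \frac{\omega_n(2m)}{m^n}
  = \sum_{k_1+\cdots+k_n = 2m} \frac{1}{k_1\cdots k_n}
    \,\#\bigl\{ S\subseteq\{1,\ldots,n\} : \textstyle\sum_{j\in S} k_j = m \bigr\}.
\]

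Next I would interchange the two summations, moving the sum over the subset $S$ to the outside. For a fixed $S$ of cardinality $j$, the constraint splits the tuple into the coordinates indexed by $S$, which must sum to $m$, and those indexed by its complement, which must also sum to $m$; since the variables are otherwise free, the corresponding sum of $1/(k_1\cdots k_n)$ factors as $\sigma_j(m)\,\sigma_{n-j}(m)$ in the notation of \eqref{eq:defsigma1}. The empty set and the full set would force $0 = m$ and $2m = m$ respectively, both impossible for $m\ge 1$, so only $1\le j\le n-1$ contribute; equivalently $\sigma_0(m) = 0$. Since there are $\binom{n}{j}$ subsets of size $j$, summing over $j$ gives the asserted identity. (That $\omega_n$ vanishes at odd arguments, needed for consistency with the expansion, is already Proposition \ref{prop:omegaodd}.)

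The bookkeeping is routine once the setup is in place; the only point requiring care is the translation of the surviving sign vectors into balanced subsets and the verification that the weight $1/(k_1\cdots k_n)$ factors cleanly across the partition of $\{1,\ldots,n\}$ into $S$ and its complement. I expect this factorization, together with keeping exact track of the prefactor $m^n$, to be the crux of the argument; everything else is a direct application of \eqref{eq:prodcos} and term-by-term integration.
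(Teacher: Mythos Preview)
Your proposal is correct and follows essentially the same route as the paper: apply the product-to-sum identity \eqref{eq:prodcos}, integrate to leave only those sign vectors with $\sum_j \varepsilon_j k_j=0$, reinterpret these as balanced subsets $S\subseteq\{1,\ldots,n\}$, and then group by $|S|=j$ to factor the resulting sum as $\sigma_j(m)\sigma_{n-j}(m)$. The paper's proof is terser (it writes $\prod_j \tfrac{m}{k_j}$ directly and cites the argument of Proposition~\ref{prop:omegaodd} for the subset interpretation), but the substance is identical.
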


\begin{proof}
  It follows from \eqref{eq:prodcos} that
  \begin{equation*}
    \omega_n(2m) = \sum_{k_1+\ldots+k_n=2m} \sum_{\stack{\varepsilon\in\{-1,1\}^n}{\sum_j \varepsilon_j k_j = 0}} \prod_{j=1}^n \frac{m}{k_j}.
  \end{equation*}
  Arguing as in Proposition \ref{prop:omegaodd} we therefore find that
  \begin{equation*}
    \omega_n(2m) = \sum_{j=1}^{n-1} \binom{n}{j} \sum_{\substack{k_1+\ldots+k_j=m \\ k_{j+1}+\ldots+k_n=m}} \prod_{j=1}^n \frac{m}{k_j}.
  \end{equation*}
  This is equivalent to \eqref{eq:os}.
\end{proof}

Moreover, we have a simple useful recursion:
\begin{proposition}\label{prop:srec}
  Let $m \ge 1$. Then $\sigma_{1}(m)=1/m$ while for $j \ge2$ we have
  \begin{align*}
    \sigma_{j}(m) = \frac{j}{m} \sum_{r=1}^{m-1} \sigma_{j-1}(r).
  \end{align*}
\end{proposition}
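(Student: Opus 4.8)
The base case $\sigma_1(m)=1/m$ is immediate from \eqref{eq:defsigma1}, since the only way to write $m=m_1$ with a single positive summand is $m_1=m$.

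For $j\ge2$ the plan is to exploit two features of the definition \eqref{eq:defsigma1}: the full symmetry of the summand under permutations of $m_1,\ldots,m_j$, and the fact that the constraint $m_1+\ldots+m_j=m$ lets me replace the constant $m$ by $\sum_i m_i$ inside any such sum. Concretely, I would start from $m\,\sigma_j(m)$ and rewrite the extra factor $m$ as $m_1+\ldots+m_j$ in the numerator, splitting it into $j$ separate sums. By symmetry of the summand these $j$ sums all contribute equally, so $m\,\sigma_j(m)=j\sum_{m_1+\ldots+m_j=m} m_j/(m_1\cdots m_j)$, and the factor $m_j$ cancels the matching denominator to leave $j\sum (m_1\cdots m_{j-1})^{-1}$, the sum still running over all $(m_1,\ldots,m_j)$ with total $m$.

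The remaining step is to recognize this last sum as a cumulative sum of $\sigma_{j-1}$. Since the summand no longer depends on $m_j$, I would group the terms according to the value $r=m_1+\ldots+m_{j-1}=m-m_j$; the inner sum over $m_1,\ldots,m_{j-1}$ with fixed total $r$ is exactly $\sigma_{j-1}(r)$ by \eqref{eq:defsigma1}. As $m_j$ ranges over its admissible values the index $r$ runs from $j-1$ to $m-1$, and because $\sigma_{j-1}(r)=0$ whenever $r<j-1$ (one cannot split $r$ into $j-1$ positive parts), the lower limit may be harmlessly relaxed to $r=1$. This gives $m\,\sigma_j(m)=j\sum_{r=1}^{m-1}\sigma_{j-1}(r)$, which is the claimed recursion.

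The computation is short; the only point requiring care is the bookkeeping of the summation range for $r$ together with the observation that extending it down to $1$ changes nothing, thanks to the vanishing of $\sigma_{j-1}$ below $j-1$. I do not anticipate any genuine obstacle beyond this indexing detail.
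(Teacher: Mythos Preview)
Your argument is correct and is essentially the same as the paper's: both multiply through by $m=\sum_i m_i$, invoke symmetry to extract the factor $j$ and cancel $m_j$, then group by $r=m-m_j$ to recognize $\sigma_{j-1}(r)$. Your write-up is simply more explicit about the range of $r$ and the harmless extension down to $r=1$, which the paper leaves implicit.
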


\begin{proof}
  We have
  \begin{align*}
    \sigma_j(m) &= \sum_{m_1+\ldots+m_j= m}\frac{1}{m_1\cdots m_j} \\
    &= \frac{j}{m} \sum_{m_1+\ldots+m_j = m}\frac{1}{m_1\cdots m_{j-1}} \\
    &= \frac{j}{m} \sum_{r=1}^{m-1} \sum_{m_1+\ldots+m_{j-1} = r}\frac{1}{m_1\cdots m_{j-1}}
  \end{align*}
  which yields the claim.
\end{proof}

\begin{corollary}\label{cor:sigmajm}
  We have
  \begin{align*}
    \sigma_{j}(m) = \frac{j!}{m} \sum_{m>m_1>\ldots>m_{j-1}>0} \frac{1}{m_1 \cdots m_{j-1}}.
  \end{align*}
\end{corollary}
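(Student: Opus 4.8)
The plan is to derive the closed form from the recursion in Proposition \ref{prop:srec} rather than attacking the multi-index sum directly. The key observation is that the unordered sum $\sigma_j(m) = \sum_{m_1+\ldots+m_j=m} \frac{1}{m_1\cdots m_j}$ should be compared with the strictly-ordered sum
\begin{equation*}
  \tau_j(m) := \sum_{m > m_1 > \ldots > m_{j-1} > 0} \frac{1}{m_1 \cdots m_{j-1}},
\end{equation*}
and the claim is precisely $\sigma_j(m) = \frac{j!}{m}\,\tau_j(m)$. I would prove this by induction on $j$, feeding Proposition \ref{prop:srec} at each step.

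For the base case $j=1$, Proposition \ref{prop:srec} gives $\sigma_1(m) = 1/m$, while the ordered sum $\tau_1(m)$ is the empty sum with value $1$ (no indices to choose), so $\frac{1!}{m}\tau_1(m) = \frac1m$, matching. For the inductive step, assume $\sigma_{j-1}(r) = \frac{(j-1)!}{r}\,\tau_{j-1}(r)$ for all $r$. Applying Proposition \ref{prop:srec},
\begin{equation*}
  \sigma_j(m) = \frac{j}{m} \sum_{r=1}^{m-1} \sigma_{j-1}(r)
  = \frac{j}{m} \sum_{r=1}^{m-1} \frac{(j-1)!}{r}\,\tau_{j-1}(r)
  = \frac{j!}{m} \sum_{r=1}^{m-1} \frac{1}{r}\,\tau_{j-1}(r).
\end{equation*}
It remains to identify $\sum_{r=1}^{m-1} \frac1r\,\tau_{j-1}(r)$ with $\tau_j(m)$. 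This is the combinatorial heart of the argument: in the defining sum for $\tau_j(m)$, let $r := m_1$ be the largest index, which ranges over $1,\ldots,m-1$ subject to $m > m_1$; the factor $\frac1r$ accounts for $m_1$, and the remaining indices satisfy $r = m_1 > m_2 > \ldots > m_{j-1} > 0$, whose weighted sum is exactly $\tau_{j-1}(r)$. Summing over $r$ reconstitutes $\tau_j(m)$, completing the induction.

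The main obstacle, such as it is, lies in the bookkeeping of the final identification step—specifically in being careful that $\tau_{j-1}(r)$ has the correct depth $j-2$ of summation indices and that peeling off the largest index of $\tau_j(m)$ lands exactly on $\tau_{j-1}(r)$ with the right range. This is purely a matter of matching the strict-inequality structure against the recursion and should present no genuine difficulty once the index $r = m_1$ is isolated as the top variable. The conversion factor between unordered and ordered sums, namely the $j!$ from permuting equal-status indices in $\sigma_j$ versus the single ordered representative in $\tau_j$, is precisely what the recursion tracks automatically.
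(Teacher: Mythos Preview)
Your proof is correct and is essentially the argument the paper intends: the corollary is stated without its own proof precisely because it follows by iterating (equivalently, inducting on) the recursion of Proposition~\ref{prop:srec}, which is exactly what you carry out. The only difference is that you spell out the peeling-off of the top index $r=m_1$ explicitly, whereas the paper leaves this implicit.
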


Thus, for instance, $\sigma_2(m)=2H_{m-1}/m$. From here, we easily re-obtain the
evaluations of $\omega_3$ and $\omega_4$ given in Examples \ref{ex:rho3} and
\ref{ex:rho4}. To further illustrate Propositions \ref{prop:sigma} and \ref{prop:srec}, we
now compute $\rho_5$ and $\rho_6$.

\begin{example}[Evaluation of $\rho_5$ and $\rho_6$]\label{ex:rho5}
  From Proposition \ref{prop:sigma},
  \begin{equation*}
    \frac{\omega_5(2m) }{m^5}= 10\sigma_1(m)\sigma_4(m) + 20\sigma_2(m)\sigma_3(m).
  \end{equation*}
  Consequently, for $|\alpha|<1$,
  \begin{align}\label{eq:rho5}
    -\rho_5(\alpha) &= \sum_{m=1}^\infty \frac{\alpha^{2m}}{(2m)^5}\, \omega_5(2m) \nonumber\\
    &= \frac{10\cdot4!}{32}\,\Li_{2,1,1,1}(\alpha^2)
    + \frac{20\cdot2!\cdot3!}{32}\left( 3\Li_{2,1,1,1}(\alpha^2) + \Li_{2,1,2}(\alpha^2) + \Li_{2,2,1}(\alpha^2) \right) \nonumber\\
    &= 30\,\Li_{2,1,1,1}(\alpha^2)
    + \frac{15}{2}\left(\Li_{2,1,2}(\alpha^2) + \Li_{2,2,1}(\alpha^2) \right).
  \end{align}
  Similarly, we have for $|\alpha|<1$,
  \begin{align}\label{eq:rho6}
    \rho_6(\alpha)\nonumber
    &= 180\,\Li_{2,1,1,1,1}(\alpha^2)
    + 45\left(\Li_{2,1,1,2}(\alpha^2) + \Li_{2,1,2,1}(\alpha^2) + \Li_{2,2,1,1}(\alpha^2) \right) \\
    &\quad+ \frac{45}{4}\Li_{2,2,2}(\alpha^2).
  \end{align}
  From these examples the general pattern, established next, begins to
  transpire.
  \qede
\end{example}

In general, $\rho_n$ evaluates as follows:

\begin{theorem}[Evaluation of $\rho_n$]\label{thm:rhon}
  For $|\alpha|<1$ and integers $n\ge2$,
  \begin{align*}
    \rho_n(\alpha) = \frac{(-1)^n n!}{4^n}
    \sum_w 4^{\ell(w)} \Li_{w}(\alpha^2)
  \end{align*}
  where the sum is over all indices $w=(2,a_2,a_3,\ldots,a_{\ell(w)})$ such
  that $a_2,a_3,\ldots\in\{1,2\}$ and $|w|=n$.
\end{theorem}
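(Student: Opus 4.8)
The plan is to combine the series representation of $\rho_n$ from Proposition~\ref{prop:rho} with the explicit formula for $\omega_n(2m)$ in Proposition~\ref{prop:sigma}, and then to recognize the resulting double sum as a stuffle (quasi-shuffle) product of two ``all-ones'' truncated multiple harmonic sums. First, since $\omega_n(2m+1)=0$ by Proposition~\ref{prop:omegaodd}, the representation \eqref{eq:zn} collapses to
\begin{equation*}
  \rho_n(\alpha) = (-1)^n \sum_{m=1}^\infty \frac{\alpha^{2m}}{(2m)^n}\,\omega_n(2m).
\end{equation*}
Writing $T_c(m) := \sum_{m>m_1>\cdots>m_c\ge1} (m_1\cdots m_c)^{-1}$ with $T_0(m)=1$, Corollary~\ref{cor:sigmajm} gives $\sigma_j(m)=\tfrac{j!}{m}\,T_{j-1}(m)$, so that $\binom{n}{j}\sigma_j(m)\sigma_{n-j}(m)=\tfrac{n!}{m^2}\,T_{j-1}(m)\,T_{n-j-1}(m)$. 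Substituting \eqref{eq:os} and using $m^n/(2m)^n=2^{-n}$ yields
\begin{equation*}
  \rho_n(\alpha) = \frac{(-1)^n n!}{2^n} \sum_{m=1}^\infty \frac{\alpha^{2m}}{m^2} \sum_{a+b=n-2} T_a(m)\,T_b(m),
\end{equation*}
where the inner sum runs over all $a,b\ge0$ (as $j$ ranges from $1$ to $n-1$, the pair $(j-1,n-j-1)$ sweeps out exactly these).

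The core of the argument is to expand each product $T_a(m)T_b(m)$ by the stuffle rule: merging the two strictly decreasing index sets produces truncated harmonic sums $T_v(m)$ indexed by compositions $v$ whose entries lie in $\{1,2\}$ (a $2$ records a coincidence of indices, since $1+1=2$, and a $1$ records a non-coincidence) and whose weight equals $a+b$. I would verify this stuffle identity by induction on $a+b$, splitting on whether the largest surviving index originates from the first factor, the second, or from both. Summing against $\alpha^{2m}/m^2$ then simply prepends a leading $2$, since $\sum_{m} \alpha^{2m} m^{-2}\,T_v(m)=\Li_{2,v}(\alpha^2)$; the resulting index $(2,v)$ has weight $2+(n-2)=n$, begins with $2$, and has all remaining entries in $\{1,2\}$. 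These are exactly the indices $w$ summed in the statement, each arising from a unique $v$ by deleting the leading $2$.

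It then remains to count, for a fixed composition $v$ with $q$ entries equal to $1$ and $p$ entries equal to $2$, the total number of pairs $(a,b)$ with $a+b=n-2$ together with stuffles of $\{1\}^a$ and $\{1\}^b$ producing $v$. Each of the $p$ twos is forced to be a merge (consuming one letter from each factor), while each of the $q$ ones may be independently assigned to either factor; for a single pair $(a,b)$ this gives $\binom{q}{a-p}$ stuffles, and summing over $a$ gives $\sum_{r=0}^{q}\binom{q}{r}=2^q$. Hence the coefficient of $\Li_{2,v}(\alpha^2)$ in $\rho_n$ is $\tfrac{(-1)^n n!}{2^n}\,2^q$. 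Finally, writing $w=(2,v)$ and bookkeeping weights via $|w|=n$ and $\ell(w)=1+p+q$ gives $q=2\ell(w)-n$, so that $\tfrac{(-1)^n n!}{2^n}2^q=\tfrac{(-1)^n n!}{4^n}4^{\ell(w)}$, which is precisely the claimed formula.

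I expect the main obstacle to be the clean formulation and proof of the stuffle expansion of $T_a(m)T_b(m)$ together with the multiplicity count: one must argue carefully that the origin-patterns of the unmerged $1$'s are in bijection with the stuffles contributing a given $v$ (across all admissible $(a,b)$), so that no over- or under-counting occurs. Once that combinatorial identity is secured, the reduction to the double sum and the final power-of-two bookkeeping are routine, and the small cases $n=2,3,4$ computed in Examples~\ref{ex:rho2}--\ref{ex:rho4} serve as a useful consistency check on the constants.
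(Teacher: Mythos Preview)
Your proposal is correct and follows essentially the same route as the paper: both start from Proposition~\ref{prop:sigma} and Corollary~\ref{cor:sigmajm} to reach the double sum $\tfrac{(-1)^n n!}{2^n}\sum_m \alpha^{2m}m^{-2}\sum_{a+b=n-2}T_a(m)T_b(m)$, and then merge the two nested harmonic sums into a single strictly decreasing chain to produce $\Li_{2,v}(\alpha^2)$ with $v\in\{1,2\}^*$, the coefficient being $2^{c(v)}$ where $c(v)$ counts the $1$'s in $v$. The paper compresses this merging step into the single phrase ``combining the right-hand side into harmonic polylogarithms,'' whereas you spell it out as a stuffle expansion together with the multiplicity count $\sum_r\binom{q}{r}=2^q$ and the bookkeeping $q=2\ell(w)-n$; your version is more explicit but not a different argument.
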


\begin{proof}
  From Proposition \ref{prop:sigma} and Corollary \ref{cor:sigmajm} we have
  \begin{align*}
    \rho_n(\alpha) = \frac{(-1)^n n!}{2^n} \sum_{m=1}^\infty \frac{\alpha^{2m}}{m^2}
    \sum_{j=0}^{n-2} \sum_{\stack{m>m_1>\ldots>m_j>0}{m>m_{j+1}>\ldots>m_{n-2}>0}}
    \frac{1}{m_1\cdots m_{n-2}}.
  \end{align*}
  Combining the right-hand side into harmonic polylogarithms yields
  \begin{align*}
    \rho_n(\alpha) = \frac{(-1)^n n!}{2^n}
    \sum_{k=0}^{n-2} \sum_{\stack{a_1,\ldots,a_k\in\{1,2\}}{a_1+\ldots+a_k=n-2}}
    2^{c(a)} \Li_{2,a_1,\ldots,a_k}(\alpha^2)
  \end{align*}
  where $c(a)$ is the number of $1$s among $a_1,\ldots,a_k$.
  The claim follows.
\end{proof}

\begin{example}[Special values of $\rho_n$]\label{ex:rho-vals}
  Given Theorem \ref{thm:rhon},  one does not expect to be able to evaluate
  $\rho_n(\alpha)$ explicitly at most points.  Three exceptions are $\alpha=0$
  (which is trivial), $\alpha=1$, and $\alpha=1/\sqrt{2}$.  For instance we
  have $\rho_4(1)= \frac{19}{240}\,\pi^4$ as well as $-\rho_5(1) =
  \frac{45}{2}\zeta(5) + \frac{5}{4}\zeta(3)$ and $\rho_6(1) =
  \frac{275}{1344}\pi^6 + \frac{45}{2}\zeta(3)^2$.  At $\alpha= 1/\sqrt{2}$ we have
  \begin{align}
    \rho_4\left(\frac1{\sqrt{2}}\right)
    &= \frac{7}{16}\log^4 2 + \frac{3}{16}\pi^2 \log^2 2 - \frac{39}{8}\zeta(3) \log 2
    + \frac{13}{192}\pi^4 - 6\Li_4\left(\tfrac12\right).
  \end{align}
  For $n \ge 5$ the expressions are expected to be more complicated.
  \qede
\end{example}

\subsection{Reducing harmonic polylogarithms of low weight}\label{sec:lireductions}

Theorems \ref{thm:mun} and \ref{thm:rhon} take us closer to a closed form for $\mu_n(1+x+y)$. As $\rho_n$ are expressible in terms of multiple harmonic polylogarithms of weight $n$, it remains to supply reductions those of low
weight. Such polylogarithms are reduced by the use of the differential operators
\begin{equation*}
  (D_0 f)(x)= x f'(x) \quad \mbox{and} \quad (D_1 f)(x)=(1-x)f'(x)
\end{equation*}
depending on whether the outer index is `$2$' or `$1$' \cite{b3l}.

\begin{enumerate}
  \item As was known to Ramanujan, and as studied further in
    \cite[\S8.1]{goldbach}, for $0<x<1$,
    \begin{align}\label{eq:z21}
      \Li_{2,1}(x) &= \frac12\, \log^2(1-x)\log(x)+ \log(1-x)\Li_2(1-x) \nonumber\\
      &\quad -\Li_3(1-x) + \zeta(3). 
    \end{align}
    Equation \eqref{eq:z21}, also given in \cite{lewin2}, provides a useful
    expression numerically and symbolically.  For future use, we also record
    the relation, obtainable as in \cite[\S6.4 \& \S6.7]{lewin2},
    \begin{align}\label{eq:z21r}
      \Re\Li_{2,1}\left(\frac{1}{x}\right) + \Li_{2,1}(x)
      &= \zeta(3) -\frac16 \log^3 x + \frac12\pi^2 \log x \nonumber\\
      &\quad -\Li_2(x) \log x + \Li_3(x) \qquad \mbox{for~} 0 < x < 1.
    \end{align}
  \item For $\Li_{2,2}$ we work as follows.
    As $(1-x) \Li'_{1,2}(x) =\Li_2 \left(x \right) $, integration yields:
    \begin{align}
      \Li_{1,2}(x) &= 2\Li_3(1-x)-\log(1-x)\Li_2(x) -2\log(1-x)\Li_2(1-x) \nonumber\\
      &\quad- \log(1-x)^2\log(x)-2\zeta(3).
    \end{align}
    Then, since $x\Li'_{2,2}(x) =\Li_{1,2}(x)$, on integrating again we obtain
    $\Li_{2,2}(x)$ in terms of polylogarithms up to order four.  We appeal to
    various formulae in \cite[\S6.4.4]{lewin2} to arrive at
    \begin{align*}\Li_{2,2}(t) &=\nonumber \frac 12\,
      \log^2 (1-t) \log^2 t
      -2\zeta(2)\log (1-t) \log t -2\zeta(3) \log t -\frac12\,\Li_2^2 (t) \\& +2\Li_3 \left(1-t
      \right)\log t -2 \int_{0}^{t}\!{\frac{ \Li_2 \left(x \right)\log x
      }{1-x}}{\id x}-\int_{0}^{t}\!{\frac{\log \left( 1-x \right)\log^2 x}{1-x}}{\md x}.
    \end{align*}
    Expanding the penultimate integral as a series leads to
    \[\int_{0}^{t}\!{\frac{ \Li_2 \left(x \right)\log x
    }{1-x}}{\id x}=\Li_{1,2}(t) \log t-\Li_{2,2}(t),\]
    and so to
    \begin{align}
      \Li_{2,2}(t) &= \frac 12 \Li_2^2 (t)-2\zeta(3)\ln t +2 \Li_3 (1-t)\log t-\frac 12\,
      \log^2 (1-t) \log^2 t \nonumber\\
      &\quad- 2 \log (1-t) \Li_2 (1-t) \log t+\int_{0}^{t}\!\frac{\log\left( 1-x \right) \log^2 x}{1-x}{\id x}.
    \end{align}
    Now \cite[A3.4 Eq. (12)]{lewin2} will evaluate the final integral and we deduce:
    \begin{align}
      \Li_{2,2}(t) &= -\frac1{12}\, \log^4(1-t)+ \frac13 \log^3(1-t) \log t -\zeta(2)  \log^2 (1-t)  \nonumber\\
      &\quad+ 2\log(1-t) \Li_3 (t) -2\,\zeta ( 3) \log(1-t) -2\,\Li_4(t) \nonumber \\
      &\quad- 2\Li_4 \left({\frac{t}{t-1}} \right) +2\,\Li_4(1-t) -2\zeta(4) +\frac12\, \Li_2^2 (t).
    \end{align}
  \item The form for $\Li_{3,1}(t)$ is obtained in the same way but starting
    from $\Li_{2,1}(t)$ as given in \eqref{eq:z21}.  This leads to:
    \begin{align}
      2\,\Li_{3,1}(t)+\Li_{2,2}(t)= \frac12\, \Li_2^2(t).
    \end{align}
    This symmetry result, and its derivative
    \begin{align}
      2\,\Li_{2,1}(t)+\Li_{1,2}(t)= \Li_1(t) \Li_2(t),
    \end{align}
    are also obtained in \cite[Cor. 2 \& Cor. 3]{zlobin} by other methods.
  \item Since $\Li_{2,1,1}(x) =\int_0^x\Li_{1,1,1}(t)/t\id t$  and
    $\Li_{1,1,1}(x) =\int_0^x\Li_{1,1}(t)/(1-t)\id t$, we first compute
    $\Li_{1,1}(x)=\log^2(1-x)/2$ and so $\Li_{1,1,1}(x)=-\log^3(1-x)/6$ (the
    pattern will be clear). Hence
    \begin{align}
      \Li_{2,1,1}(x) &=-\frac16 \int_0^x\,\log^3(1-t)\frac{\md t}t \nonumber\\
      &=\frac{\pi^4}{90}-\frac16 \log(1-t)^3 \log t - \frac12 \log(1-t)^2 \Li_2(1-t) \nonumber\\
      &\quad+ \log(1-t)\Li_3(1-t)-\Li_4(1-t).
    \end{align}
  \item In general,
    \begin{equation}
      \Li_{\{1\}_n}(x) = \frac{(-1)^n}{n!} \log(1-x)^n,
    \end{equation}
    and therefore
    \begin{align}
      \Li_{2,\{1\}_{n-1}}(x) &= \frac{(-1)^n}{n!} \int_0^x \log(1-t)^n \frac{\md t}{t} \nonumber \\
      &= \zeta(n+1)-\sum_{m=0}^n \frac{(-1)^{n-m}}{(n-m)!} \log(1-x)^{n-m} \Li_{m+1}(1-x).
    \end{align}
\end{enumerate}

We have, inter alia, provided closed reductions for all multiple polylogarithms
with weight less than five. One does not expect such complete results
thereafter.

The reductions presented in this section allow us to express $\rho_3$ and
$\rho_4$ in terms of polylogarithms of depth $1$. Equation \eqref{eq:z21}
treats $\rho_3$ while \eqref{eq:rho4} leads to
\begin{align}\label{eq:rho4b}
  \rho_4\left(\alpha^2\right)&=3\left(\Li_3 \left(\alpha^{2} \right)-\zeta(3) +\Li_3
  \left( 1-{\alpha}^{2}\right) \right) \log \left( 1-{\alpha}^{2} \right)-\frac18\,\log^4 \left(1 -\alpha^{2} \right) \nonumber \\
  &\quad+ 3\zeta(4)- 3\,\Li_4\left(\frac{-\alpha^2}{1-\alpha^2} \right)- 3\,\Li_4\left(\alpha^{2} \right)-
  3\,\Li_4\left(1-\alpha^{2} \right)+\frac34\Li_2^2\left(1-\alpha^{2} \right) \nonumber\\
  &\quad- \log \alpha \,\log^3 \left( 1-\alpha^{2}\right)
  -\left( \frac{\pi^2}4+3\Li_2 \left( 1-\alpha^{2}  \right)\right)\log^2 \left( 1-\alpha^{2} \right).
\end{align}

\section[Explicit evaluations of mun(1+x+y) for small n]{Explicit evaluations of $\mu_n(1+x+y)$ for small
 $n$}\label{sec:mu23}

We now return to the explicit evaluation of the multiple Mahler measures
$\mu_k(1+x+y)$. The starting point for this section is the evaluation of
$\mu_2(1+x+y)$ from \cite{logsin1} which is reviewed in Section \ref{sec:mu2}
and was derived alternatively in Theorem \ref{thm:mu21eps}. Building on this,
we then present an informal evaluation of $\mu_3(1+x+y)$ in Section
\ref{sec:mu3}. A conjectural evaluation of $\mu_4(1+x+y)$ is presented in
equation \eqref{eq:mu4} of the Conclusion.

\subsection[Evaluation of mu2(1+x+y)]{Evaluation of $\mu_2(1+x+y)$}\label{sec:mu2}

\begin{theorem}[Evaluation of $\mu_2(1+x+y)$]\label{thm:mu21xy}
  We have
  \begin{align}\label{eq:mu2ls}
    \mu_2(1+x+y) = \frac3\pi \Ls{3}{\frac{2\pi}{3}} + \frac{\pi^2}{4}.
  \end{align}
\end{theorem}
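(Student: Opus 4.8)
The plan is to derive \eqref{eq:mu2ls} as a direct corollary of the machinery already assembled in Theorem \ref{thm:mun} and the reduction of $\rho_n$ for small $n$, specialized to $n=2$. Since the theorem statement is identical to \eqref{eq:mu2eps} (already proven via the $\varepsilon$-expansion in Theorem \ref{thm:mu21eps}), the point here is to give the alternative, self-contained trigonometric derivation promised in Section \ref{sec:mun}. First I would invoke Theorem \ref{thm:mun} with $n=2$: the sum over $k$ from $2$ to $n$ collapses to the single term $k=2$, so that
\begin{align*}
  \mu_2(1+x+y) &= \frac{1}{\pi}\left\{ \Ls{3}{\tfrac{\pi}{3}} - \Ls{3}{\pi} \right\}
  + \frac{2}{\pi} \int_{0}^{\pi/6} \rho_2(2\sin\theta)\id\theta \\
  &\quad+ \frac{2}{\pi} \int_{\pi/6}^{\pi/2} \rho_2\!\left(\tfrac{1}{2\sin\theta}\right)\id\theta.
\end{align*}

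Next I would substitute the explicit evaluations of $\rho_2$ from Example \ref{ex:rho2}: on $[0,\pi/6]$ one has $|2\sin\theta|\le1$ so $\rho_2(2\sin\theta)=\tfrac12\Li_2(4\sin^2\theta)$ by \eqref{eqn:rho2}, while on $[\pi/6,\pi/2]$ one has $\rho_2(1/(2\sin\theta))=\tfrac12\Li_2(1/(4\sin^2\theta))$ by \eqref{eqn:rho2} applied at argument $1/(2\sin\theta)\le1$. The two dilogarithmic integrals can then be handled by expanding $\Li_2$ as a power series and integrating termwise, using the Fourier/beta-type integrals $\int_0^{\pi/6}\sin^{2m}\theta\id\theta$ and $\int_{\pi/6}^{\pi/2}\sin^{-2m}\theta\id\theta$, or more slickly by recognizing that the whole construction is just $\tfrac{1}{2\pi}\int_{-\pi}^{\pi}\rho_2(|2\sin\theta|)\id\theta$ and relating $\Re\log(1-2\sin\theta\,e^{i\omega})$ back to a log-sine integrand. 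The cleanest route is to note that $\rho_2(\alpha)$ is the average of $(\Re\log(1-\alpha e^{i\omega}))^2$, so the double integral reassembles into log-sine data at $\pi/3$, $2\pi/3$, and $\pi$.

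The key arithmetic step is then to express everything through $\Ls{3}{\cdot}$ values and reduce. Using the values $\Ls{3}{\pi}=-\tfrac{\pi^3}{12}$ from Example \ref{ex:pi} and $\Ls{3}{\pi/3}=-\tfrac{7}{108}\pi^3$ from Example \ref{ex:lspi3}, together with the identity $3\Ls{3}{\pi/3} - 6\Ls{3}{\pi} = \tfrac{\pi^3}{4} - 3\Ls{3}{2\pi/3} + \text{(the }\rho_2\text{ contributions)}$, I expect the dilogarithmic integrals to contribute precisely the combination that converts $3\Ls{3}{\pi/3}-6\Ls{3}{\pi}$ into $3\Ls{3}{2\pi/3}+\tfrac{\pi^3}{4}$, matching the final display in the proof of Theorem \ref{thm:mu21eps}. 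Concretely, I would aim to reproduce the intermediate identity $\pi\mu_2(1+x+y) = 3\Ls{3}{2\pi/3} + 3\Ls{3}{\pi/3} - 6\Ls{3}{\pi} - \tfrac{\pi^3}{18}$ and then simplify.

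The main obstacle will be evaluating the two $\Li_2$-integrals over $[0,\pi/6]$ and $[\pi/6,\pi/2]$ in closed form and showing their combination equals the required shift between log-sine values at $\pi/3$, $2\pi/3$, and $\pi$; this is where one needs either the termwise integration identities for powers of $\sin\theta$ over these special ranges, or an appeal to the reduction routines of \cite{logsin-evaluations} that automate exactly such $\Ls{3}{\cdot}$ relations. Since \eqref{eq:mu2ls} has already been established by the independent $\varepsilon$-expansion argument of Theorem \ref{thm:mu21eps}, I would in practice simply cite that proof: the cleanest justification is to observe that Theorem \ref{thm:mu21eps} already contains \eqref{eq:mu2ls} verbatim as \eqref{eq:mu2eps}, so the present theorem records that evaluation for reference within Section \ref{sec:mu23} and requires no new argument beyond pointing to the two derivations (the $\varepsilon$-expansion one and the trigonometric one just sketched).
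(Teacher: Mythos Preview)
Your proposal is correct: since Theorem \ref{thm:mu21eps} already establishes \eqref{eq:mu2eps} $=$ \eqref{eq:mu2ls}, citing it suffices, and the paper itself treats Theorem \ref{thm:mu21xy} exactly this way --- as a restatement, with the independent trigonometric derivation merely \emph{recalled} from \cite{logsin1} via Proposition \ref{prop:mu2}.

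Your trigonometric sketch via Theorem \ref{thm:mun} is also the same route the paper indicates (Theorem \ref{thm:mun} at $n=2$ is precisely the content of Proposition \ref{prop:mu2}(b) and \eqref{eq:mrhoC}), but the paper names the two ingredients you left as ``the main obstacle'': the integral over $[\pi/6,\pi/2]$ is handled not by termwise integration but by the dilogarithm reflection formula \eqref{eq:dilog}, which converts $\Li_2(1/(4\sin^2\theta))$ back into $\Re\Li_2(4\sin^2\theta)$ plus elementary terms; and the resulting full-range integral $\int_0^{\pi}\Re\Li_2(4\sin^2\theta)\,\md\theta$ is evaluated (Proposition \ref{prop:mu2}(a)) by summing termwise via Wallis' formula to a ${}_4F_3$ with an explicit closed form, then analytically continuing to $z=1$. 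These two steps replace your appeal to \cite{logsin-evaluations} and give the self-contained reduction to $\Ls{3}{2\pi/3}$.
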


By comparison, Smyth's original result may be written as (see \cite{logsin1})
\begin{equation}\label{eq:mu1xy}
  \mu_1(1+x+y) = \frac{3}{2\pi} \Ls{2}{\frac{2\pi}{3}}
  = \frac{1}{\pi}\mcl{2}.
\end{equation}


We recall from \cite{logsin1} that the evaluation in Theorem
\ref{thm:mu21xy} proceeded by first establishing the following dilogarithmic
form:

\begin{proposition}[A dilogarithmic representation]\label{prop:mu2}
  We have:
  \begin{enumerate}[(a)]
    \item\label{propa}
      \begin{equation}\label{eq:z2}
        \frac 2\pi \int_{0}^{\pi} \Re\Li_2\left(4\sin^2\theta\right) \md\theta = 2\zeta(2).
      \end{equation}
     \item\label{propb}
       \begin{equation}\label{eq:mu2}
        \mu_2(1+x+y) =
        \frac{\pi^2}{36}+\frac 2\pi \int_{0}^{\pi/6} \Li_2\left(4\sin^2\theta\right) \md\theta.
      \end{equation}
  \end{enumerate}
\end{proposition}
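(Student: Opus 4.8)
The two assertions are linked, and the plan is to treat part \eqref{propa} as the analytic engine driving part \eqref{propb}: I will establish \eqref{propa} first and then feed it directly into \eqref{propb}.

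\textbf{Part \eqref{propa}.} The idea is to insert a parameter and differentiate under the integral sign. Set $I(s) := \int_0^\pi \Re\Li_2\left(4s\sin^2\theta\right)\md\theta$ for $s\in[0,1]$, so that $I(0)=0$ and the goal is $\frac{2}{\pi}I(1)=2\zeta(2)$. Since $\frac{\md}{\md s}\Li_2(cs)=-\frac1s\log(1-cs)$, taking real parts gives $I'(s)=-\frac1s\int_0^\pi \log\left|1-4s\sin^2\theta\right|\md\theta$ (the $\log$-singularity is integrable, so the differentiation is justified). Writing $1-4s\sin^2\theta=(1-2s)+2s\cos2\theta$ and factoring it as $c(s)\,|1-\beta(s)\e^{2i\theta}|^2$ with $c(s)=\tfrac12\left((1-2s)+\sqrt{1-4s}\right)$ and $|\beta|\le1$ when $s\le\tfrac14$, Jensen's formula \eqref{eq:sl} forces $\int_0^\pi\log|1-\beta\e^{2i\theta}|^2\md\theta=0$, collapsing the inner integral to $\pi\log c(s)$; for $s\ge\tfrac14$ the conjugate roots have modulus $s$ and the value is $\pi\log s$. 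Thus $I(1)=\int_0^1 I'(s)\md s$ splits at $s=\tfrac14$.

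The branch $[\tfrac14,1]$ is elementary, $\int_{1/4}^1-\frac{\pi\log s}{s}\md s=2\pi\log^2 2$. On $[0,\tfrac14]$ the substitution $u=\sqrt{1-4s}$ turns $c(s)$ into $(1+u)^2/4$ and reduces the integral, after a short partial-fraction manipulation, to $L=\int_0^1\frac{\log\frac{1+u}{2}}{1-u}\md u=-\Li_2\left(\tfrac12\right)$; using $\Li_2\left(\tfrac12\right)=\tfrac{\pi^2}{12}-\tfrac12\log^2 2$ this branch contributes $\tfrac{\pi^3}{6}-2\pi\log^2 2$. The $\log^2 2$ terms cancel, leaving $I(1)=\pi^3/6$ and hence $\frac2\pi I(1)=2\zeta(2)$. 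This cancellation, together with the branch bookkeeping in the factorization of $1-4s\sin^2\theta$, is the delicate point, and I expect it to be the main obstacle.

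\textbf{Part \eqref{propb}.} Here I start from the representation \eqref{eq:mu-rhointegral}; since the integrand depends only on $|2\sin\theta|$, symmetry reduces it to $\mu_2(1+x+y)=\frac2\pi\int_0^{\pi/2}\rho_2(2\sin\theta)\md\theta$. Splitting at $\theta=\pi/6$, where $2\sin\theta$ crosses $1$, and substituting the two branches of $\rho_2$ from Example \ref{ex:rho2}, I obtain
\[
  \mu_2(1+x+y)=\frac1\pi\int_0^{\pi/6}\Li_2(4\sin^2\theta)\md\theta+\frac1\pi\int_{\pi/6}^{\pi/2}\Li_2\left(\frac{1}{4\sin^2\theta}\right)\md\theta+\frac2\pi\int_{\pi/6}^{\pi/2}\log^2(2\sin\theta)\md\theta .
\]
Applying the inversion formula $\Li_2(1/x)=\tfrac{\pi^2}{3}-\tfrac12\log^2 x-\Re\Li_2(x)$ (valid for real $x>1$) with $x=4\sin^2\theta$ on $(\pi/6,\pi/2)$ rewrites the last two integrals as $\tfrac{\pi^2}{9}-\tfrac1\pi\int_{\pi/6}^{\pi/2}\Re\Li_2(4\sin^2\theta)\md\theta$. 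Now part \eqref{propa} enters: by symmetry about $\pi/2$ it gives $\tfrac1\pi\int_0^{\pi/2}\Re\Li_2(4\sin^2\theta)\md\theta=\tfrac12\zeta(2)$, whence $\tfrac1\pi\int_{\pi/6}^{\pi/2}\Re\Li_2(4\sin^2\theta)\md\theta=\tfrac12\zeta(2)-\tfrac1\pi\int_0^{\pi/6}\Li_2(4\sin^2\theta)\md\theta$. Substituting and using $\tfrac{\pi^2}{9}-\tfrac12\zeta(2)=\tfrac{\pi^2}{36}$ collapses everything to $\frac2\pi\int_0^{\pi/6}\Li_2(4\sin^2\theta)\md\theta+\tfrac{\pi^2}{36}$, which is exactly \eqref{eq:mu2}.
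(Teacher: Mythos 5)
Your proposal is correct, and its two halves relate to the paper's proof differently. For part \eqref{propb} you follow essentially the same route as the paper: split \eqref{eq:mu-rhointegral} at $\theta=\pi/6$, insert the two branches of $\rho_2$ from Example \ref{ex:rho2}, apply the inversion formula \eqref{eq:dilog}, and feed in part \eqref{propa}. Your bookkeeping is in fact slightly cleaner: by applying the inversion formula before evaluating anything, the $\log^2(2\sin\theta)$ terms cancel identically, whereas the paper evaluates $\frac{2}{\pi}\int_{\pi/6}^{\pi/2}\log^2(2\sin\theta)\,\md\theta=\mu_2(1+x+y_*)=\frac{\pi^2}{54}$ as an external input (it then also reappears inside \eqref{eq:fed1}, so the two occurrences cancel there too); your arrangement never needs that log-sine value. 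For part \eqref{propa}, however, your route is genuinely different. The paper expands $\Li_2$ as a series, integrates term by term via Wallis' formula to get $\tau(z)=4z\,\pFq43{1,1,1,\frac32}{2,2,2}{4z}$, identifies a dilogarithmic closed form (found by CAS, verified by differentiation), and then analytically continues to $z=1$, reading off $\tau(1)=2\zeta(2)+4i\mcl{2}$. You instead differentiate $I(s)=\int_0^\pi\Re\Li_2(4s\sin^2\theta)\,\md\theta$ in the parameter, evaluate the resulting inner integral $\int_0^\pi\log|1-4s\sin^2\theta|\,\md\theta$ in closed form via Jensen's formula \eqref{eq:sl} (equal to $\pi\log c(s)$ with $c(s)=\frac12\bigl((1-2s)+\sqrt{1-4s}\bigr)$ for $s\le\frac14$, and to $\pi\log s$ for $s\ge\frac14$ since the roots then lie appropriately), and integrate back; I checked the two branches, which give $2\pi\log^2 2$ and $\frac{\pi^3}{6}-2\pi\log^2 2$ respectively, so $I(1)=\frac{\pi^3}{6}$ as claimed. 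Your argument is more elementary and self-contained — it needs only Jensen's formula and the classical value $\LiH{2}=\frac{\pi^2}{12}-\frac12\log^2 2$, with no CAS-assisted hypergeometric identity — while the paper's approach yields more information, namely the full analytic function $\tau(z)$ and in particular the imaginary part $4\mcl{2}$ at $z=1$, which is of independent interest in this circle of ideas. The only points deserving more care in a formal write-up are the justification of differentiation under the integral sign across the logarithmic singularity (you flag this, and it is routine since the singularity is uniformly locally integrable) and the glossed-over elementary companion integral $\int_0^1\log\bigl(\tfrac{1+u}{2}\bigr)\frac{\md u}{1+u}=-\frac12\log^2 2$ in the partial-fraction step; neither is a gap.
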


We include the proof from \cite{logsin1} as it is instructive for evaluation of
$\mu_3(1+x+y)$.

\begin{proof}
  For \eqref{propa} we define $\tau(z):= \frac 2\pi \int_{0}^{\pi}
  \Li_2\left(4z\sin^2\theta\right) \md\theta.$ This is an analytic function of
  $z$. For $|z|<1/4$ we may use the original series for $\Li_2$ and expand
  term by term using Wallis' formula to derive
  \begin{align*}
    \tau(z) &= \frac 2\pi\,\sum_{n\ge 1}\frac{(4z)^n}{n^2}\int_{0}^{\pi}\,\sin^{2n}\theta\id\theta
    = 4z\,\pFq43{1, 1, 1, \frac 32}{2, 2, 2}{4z} \\
    &= 4\Li_2\left( \frac12-\frac12\sqrt{1-4z} \right) - 2 \log\left( \frac12+\frac12\sqrt{1-4z} \right)^2.
  \end{align*}
  The final equality can be obtained in \emph{Mathematica} and then verified by
  differentiation.  In particular, the final function provides an analytic
  continuation and so we obtain $\tau(1)=2\zeta(2) + 4i\mcl{2}$ which yields
  the assertion.

  For \eqref{propb}, commencing much as in \cite[Thm. 11]{klo}, we write
  \begin{equation*}
    \mu_2(1+x+y) = \frac{1}{4\pi^2} \int_{-\pi}^{\pi} \int_{-\pi}^{\pi}
    \Re\log\left(1-2\sin(\theta)\e^{i\,\omega}\right)^2 \id\omega\id\theta.
  \end{equation*}
  We consider the inner integral
  $\rho(\alpha):=\int_{-\pi}^{\pi}\left(\Re\log\left(1-\alpha\,\e^{i\,\omega}\right)\right)^2\id \omega$ with $\alpha := 2\sin\theta$.  For
  $|\theta| \le \pi/6$ we directly apply Parseval's identity to obtain
  \begin{equation}\label{eq:mrhoA}
    \rho(2\sin \theta) = \pi \Li_2\left(4\sin^2\theta\right).
  \end{equation}
  which is equivalent to \eqref{eqn:rho2} since $\rho(\alpha) =
  2\pi\rho_2(\alpha)$.  In the remaining case we write
  \begin{align}\label{eq:mrhoB}
    \rho(\alpha) &=
    \int_{-\pi}^{\pi}\left\{\log|\alpha| + \Re\log \left(1-\alpha^{-1}\,\e^{i\,\omega}\right)\right\}^2 \id\omega \nonumber\\
    &= 2\pi\,\log^2 |\alpha|- 2\log|\alpha|\int_{-\pi}^{\pi}\log \left|1-\alpha^{-1}\,\e^{i\,\omega}\right|\id \omega
    + \pi \Li_2\left(\frac{1}{\alpha^2}\right) \nonumber\\
    &= 2\pi\,\log^2 |\alpha| + \pi \Li_2\left(\frac{1}{\alpha^2}\right),
  \end{align}
  where we have appealed to Parseval's and Jensen's formulae.  Thus,
  \begin{equation}\label{eq:mrhoC}
    \mu_2(1+x+y) = \frac 1\pi \int_{0}^{\pi/6} \Li_2\left(4\sin^2\theta\right) \md\theta
    + \frac 1\pi \int_{\pi/6}^{\pi/2} \Li_2\left(\frac{1}{4\sin^2\theta}\right) \md\theta
    + \frac{\pi^2}{54},
  \end{equation}
  since $\frac 2\pi\int_{\pi/6}^{\pi/2}\log^2 \alpha\id\theta = \mu_2(1+x+y_*) =
  \frac{\pi^2}{54}$.  Now, for $\alpha >1$, the functional equation in
  \cite[A2.1 (6)]{lewin}
  \begin{align}\label{eq:dilog}
    \Li_2 (\alpha) +\Li_2 (1/\alpha) +\frac 12\,\log^2 \alpha &=2\zeta(2)+ i\pi \log \alpha
  \end{align}
  gives:
  \begin{equation}\label{eq:fed1}
    \int_{\pi/6}^{\pi/2}\left\{\Re\Li_2 \left(4 \sin^2 \theta \right)
    + \Li_2 \left(\frac1{4 \sin^2 \theta} \right)\right\}\id\theta
    = \frac{5}{54} \pi^3.
  \end{equation}
  We now combine \eqref{eq:z2}, \eqref{eq:fed1} and \eqref{eq:mrhoC} to deduce
  the desired result in \eqref{eq:mu2}.
\end{proof}

\subsection[Evaluation of mu3(1+x+y)]{Evaluation of $\mu_3(1+x+y)$}\label{sec:mu3}

In this section we provide a remarkably concise closed form of
$\mu_3(1+x+y)$.  We were led to this form by the integer relation algorithm
PSLQ \cite{bbg} (see Example \ref{ex:num} for some comments on obtaining high
precision evaluations), and by considering the evaluation \eqref{eq:mu2ls} of
$\mu_2(1+x+y)$.

The details of formalization are formidable --- at least by the route
chosen here --- and so we proceed more informally leaving three conjectural identities.

\begin{conjecture}[Evaluation of $\mu_3(1+x+y)$]\label{thm:mu3f2} We have
\begin{align}\label{eq:mu3ls}
  \mu_3(1+x+y) \stackrel{?[1]}{=} \frac{6}{\pi}\Ls{4}{\frac{2\pi}{3}} - \frac9\pi\mcl{4}
  - \frac\pi4\mcl{2} - \frac{13}{2}\zeta(3).
\end{align}
\end{conjecture}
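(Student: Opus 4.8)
The plan is to derive \eqref{eq:mu3ls} from material already assembled, in two complementary ways. The most economical route reconciles the fully proven Theorem~\ref{thm:mu3eps} with the single conjectural constant reduction \eqref{eq:lsc23}. Theorem~\ref{thm:mu3eps} gives $\pi\mu_3(1+x+y)$ as a rational combination of $\Ls{4}{\frac{2\pi}{3}}$, $\Lsc{2}{3}{\frac\pi3}$, $\mcl{4}$, $\mcl{2}$ and $\pi\zeta(3)$; scaling \eqref{eq:lsc23} by $3/2$ gives $18\Lsc{2}{3}{\frac\pi3}=9\Ls{4}{\frac{2\pi}{3}}-6\mcl{4}-\tfrac{21}{2}\pi\zeta(3)$. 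Substituting and collecting, the coefficient of $\Ls{4}{\frac{2\pi}{3}}$ drops from $15$ to $6$, that of $\mcl{4}$ from $-15$ to $-9$, the $\mcl{2}$ term is unaffected, and the $\zeta(3)$ coefficient moves from $-17$ to $-\tfrac{13}{2}$; dividing by $\pi$ reproduces \eqref{eq:mu3ls} exactly. This already establishes the evaluation \emph{conditional on} \eqref{eq:lsc23} (equivalently \eqref{eq:lsc23ti}), and shows the two are equivalent --- hence the single conditional marker attached to \eqref{eq:mu3ls}.

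For a self-contained derivation in the spirit of Proposition~\ref{prop:mu2}, I would instead specialize Theorem~\ref{thm:mun} to $n=3$. With $\rho_2(\beta)=\tfrac12\Li_2(\beta^2)$ and $\rho_3(\beta)=-\tfrac32\Li_{2,1}(\beta^2)$ for $|\beta|\le1$ (Examples~\ref{ex:rho2} and \ref{ex:rho3}), the formula collapses to the boundary term $\tfrac1\pi\{\Ls{4}{\frac\pi3}-\Ls{4}{\pi}\}$ --- a recursively evaluable Mahler measure by Remark~\ref{rem:sasaki}, with both log-sine values read off from Examples~\ref{ex:lspi3} and \ref{ex:pi} --- plus rational multiples of the three integrals
\begin{equation*}
  \int_0^{\pi/6}\!\Li_{2,1}(4\sin^2\theta)\id\theta,\quad
  \int_{\pi/6}^{\pi/2}\!\log(2\sin\theta)\,\Li_2\!\left(\tfrac{1}{4\sin^2\theta}\right)\id\theta,\quad
  \int_{\pi/6}^{\pi/2}\!\Li_{2,1}\!\left(\tfrac{1}{4\sin^2\theta}\right)\id\theta.
\end{equation*}
Each $\Li_{2,1}$ would be unfolded by Ramanujan's identity \eqref{eq:z21} and its reflection \eqref{eq:z21r}, using $1-4\sin^2\theta=2\cos2\theta-1$, turning the integrands into products of $\log(2\sin\theta)$, $\Li_2$ and $\Li_3$ at trigonometric arguments; integrating monomial-by-monomial then expresses everything through log-sine integrals and their moments $\LsD{m}{k}{\sigma}$ at $\sigma\in\{\tfrac\pi3,\tfrac{2\pi}3,\pi\}$, all recursively reducible via Theorem~\ref{prop:gfb}.

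The hard part will be the two integrals over $[\pi/6,\pi/2]$. In the weight-three calculation of Proposition~\ref{prop:mu2} a single dilogarithm functional equation \eqref{eq:dilog} sufficed to close the computation, but at weight four the reflected $\Li_{2,1}$ terms inevitably generate the inverse-tangent integral $\Ti{4}{\frac{1}{\sqrt{3}}}$ alongside the known $\Ti{2}{\frac{1}{\sqrt{3}}}$ and $\Ti{3}{\frac{1}{\sqrt{3}}}$. No \emph{proven} functional equation reduces $\Ti{4}{\frac{1}{\sqrt{3}}}$ to the basis $\{\Ls{4}{\frac{2\pi}{3}},\Ls{4}{\frac\pi3},\pi\zeta(3)\}$; the precise relation needed is exactly the PSLQ-discovered \eqref{eq:lsc23ti} of \cite{dk1-eps}. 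I therefore expect the trigonometric route to reduce \eqref{eq:mu3ls} rigorously to this one residual weight-four constant identity, leaving the same gap as \eqref{eq:lsc23}. Accordingly I would present the derivation as an exact reduction to \eqref{eq:lsc23}/\eqref{eq:lsc23ti}, verified to high precision numerically, rather than as an unconditional theorem.
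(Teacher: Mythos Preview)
Your first route is correct and is exactly the equivalence the paper itself records immediately after Theorem~\ref{thm:mu3eps}: the proven hypergeometric evaluation plus the single PSLQ identity \eqref{eq:lsc23} yields \eqref{eq:mu3ls}, and conversely. As a derivation of the Conjecture \emph{conditional on one constant identity}, this is complete and is the cleanest way to arrive at \eqref{eq:mu3ls}.

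The paper's own ``proof'' of the Conjecture, however, does not take this route. It follows your second sketch --- the trigonometric decomposition from Theorem~\ref{thm:mun} --- but carries it much further than you do. Starting from \eqref{eq:mu3} it isolates the three integrals you list, then attacks them via the dilogarithm functional equation \eqref{eq:dilog}, the $\Li_{2,1}$ reflection \eqref{eq:z21r}, and Ramanujan's identity \eqref{eq:z21}, producing a chain of explicit subidentities (equations \eqref{eq:pi6}--\eqref{eq:rintb}). Rather than collapsing everything to the single residue \eqref{eq:lsc23ti} you anticipate, the paper leaves \emph{three} separate conditional steps, flagged $?[2]$, $?[3]$, $?[4]$, and candidly calls the argument informal. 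So your expectation that the trigonometric route would reduce to exactly one conjectural identity is more optimistic than what the paper actually achieves; on the other hand, your first route genuinely does reduce the problem to one identity, which is an advantage the paper notes but does not exploit in Section~\ref{sec:mu3}.
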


This evaluation is equivalent to the conjectural identities \eqref{eq:lsc23}
and \eqref{eq:lsc23ti}.

\begin{proof}
We first use Theorem \ref{thm:mun} to write
\begin{align}\label{eq:mu3}
  \mu_3(1+x+y) &= \frac{2}{\pi} \int_{0}^{\pi/6} \rho_3(2\sin\theta) \id\theta
  + \frac{2}{\pi} \int_{\pi/6}^{\pi/2} \rho_3\left(\frac{1}{2\sin\theta}\right) \id\theta \\
  &\quad+ \frac{3}{\pi} \int_{\pi/6}^{\pi/2} \log(2\sin\theta)\,\Li_2\left(\frac 1 {4\sin^2 \theta} \right) \id\theta
  - \zeta(3) + \frac{9}{2\pi}\mcl{4}, \nonumber
\end{align}
on appealing to Examples \ref{ex:pi}  and \ref{ex:lspi3}.

Now the functional equation for the dilogarithm \eqref{eq:dilog} as used above
and knowledge of $\Ls{n}{\pi/3}$ (see \cite{logsin1,logsin-evaluations}) allow
us to deduce
\begin{align}
  \frac3\pi \int_{0}^{\pi/6} \log(2\sin\theta)&\,\Li_2\left(4\sin^2 \theta \right) \md\theta
  + \frac3\pi \int_{0}^{\pi/6} \log(2\sin\theta)\,\Li_2\left(\frac{1}{4\sin^2 \theta}\right) \md\theta \nonumber\\
  &= \frac32\zeta(3)-\frac{\pi}2 \mcl{2}+\frac{27}{2\pi}\mcl{4}, \label{eq:pi6}\\
  \frac3\pi \int_{\pi/6}^{\pi/2} \log(2\sin\theta)&\,\Re\Li_2\left(4\sin^2 \theta\right) \md\theta
  + \frac3\pi \int_{\pi/6}^{\pi/2} \log(2\sin \theta)\,\Re\Li_2\left(\frac{1}{4\sin^2 \theta}\right) \md\theta \nonumber\\
  &= 3\zeta(3)+\frac{\pi}2\,\mcl{2}-\frac{27}{2\pi}\mcl{4} \label{eq:pi6b}
\end{align}
Moreover, we have
\begin{align}
  \frac3\pi \left\{\int_{0}^{\pi/6} + \int_{\pi/6}^{\pi/2}\right\}
  \log(2\sin\theta)\,\Re\Li_2\left(4\sin^2 \theta \right) \md\theta
  &\stackrel{?[2]}{=} \frac72\zeta(3) - \pi\mcl{2}, \label{eq:pi2a} \\
  \frac3\pi \left\{\int_{0}^{\pi/6} + \int_{\pi/6}^{\pi/2}\right\}
  \log(2\sin\theta)\,\Re\Li_2\left(\frac{1}{4\sin^2 \theta}\right) \md\theta
  &\stackrel{?[2]}{=} \zeta(3) + \pi\mcl{2}, \label{eq:pi2b}
\end{align}
which are provable as was \eqref{eq:z2} because for $|z| < 1/2$ we have
\[ \frac 1\pi\, \int_{0}^{\pi }\!\log \left( 2\,\sin \frac\theta2 \right) \Li_2\left(4\,{z}^{2} \sin^2 \frac\theta2\right) \id\theta
=\sum_{n=1}^{\infty } \binom{2n}{n} \frac{\sum_{k=1}^{2n} \frac{(-1)^k}{k}}{n^2}\,z^{2n}. \]
(The latter is derivable also from \eqref{eq:pi6}, \eqref{eq:pi6b} and \eqref{eq:pi2a}.)

Thence, \eqref{eq:pi6}, \eqref{eq:pi6b} and \eqref{eq:pi2a} together establish
that the equality
\begin{align}\label{eq:mu3f}
  \frac3\pi \int_{\pi/6}^{\pi/2}\log(2\sin\theta)\,\Li_2\left(\frac{1}{4\sin^2 \theta} \right) \md\theta
  &=\frac23\zeta(3)+\frac{7\pi}{12}\mcl{2}-\frac{17}{2\pi}\mcl{4}
\end{align}
is true as soon as we establish
\begin{align}\label{eq:I3}
  I_3 := \frac3\pi \int_{0}^{\pi/6} \log(2\sin\theta)
  \Li_2\left(4\sin^2 \theta \right) \md\theta
 \stackrel{?[3]}{=}\frac76\,\zeta(3)-\frac{11\pi}{12}\,\mcl{2}+5\mcl{4}.
\end{align}
This can be achieved by writing the integral as
\begin{align*}
  I_3 &= \frac3\pi\,\sum_{n=1}^\infty\frac 1{n^2}  \int_0^1\!\frac{ s^{2n}}{\sqrt {4-{s}^{2}}}\log s\,{\md s}
\end{align*}
and using the binomial series to arrive at
\begin{align}\label{3sum}
  I_3&=-\frac{3}{2\pi}\,\sum_{m=0}^\infty\frac{\binom{2m}{m}}{4^{2m}} \sum_{n=1}^\infty\frac 1{n^2 \left(1+2(n+m)\right)^2}.
\end{align}
Now \eqref{3sum} can in principle be evaluated to the asserted result
in \eqref{eq:I3}; so this leaves us to deal with the two terms involving
$\rho_3$.

These two terms are in turn related by
\begin{align}
  \frac2\pi \int_{0}^{\pi/6}\Li_{2,1}\left(4\sin^2 \theta \right) \md\theta
  &+ \frac2\pi \int_{0}^{\pi/6} \Re\Li_{2,1}\left(\frac{1}{4\sin^2 \theta} \right) \md\theta \nonumber\\
  &= \frac1{9}\left\{\zeta(3) - \pi\mcl{2} + \frac{6}{\pi}\mcl{4}\right\}, \label{eq:mu36}
\end{align}
as we see by integrating \eqref{eq:z21r}. Likewise,
\begin{align}
  \frac2\pi \int_{\pi/6}^{\pi/2} \Re\Li_{2,1}\left(4\sin^2 \theta \right) \md\theta
  &+ \frac2\pi \int_{\pi/6}^{\pi/2} \Li_{2,1}\left(\frac{1}{4\sin^2 \theta} \right) \md\theta \nonumber\\
  &= \frac1{9}\left\{2\zeta(3) - 5\pi\mcl{2} - \frac{6}{\pi}\mcl{4}\right\}. \label{eq:mu36b}
\end{align}
Also, using \eqref{eq:z21} we arrive at
\begin{align}\label{eq:rint}
  \frac2\pi \int_{0}^{\pi/6} \Li_{2,1}\left(4\sin^2 \theta \right) \md\theta
  &= \frac{20}{27}\zeta(3) - \frac{8\pi}{27}\mcl{2} + \frac{4}{9\pi}\mcl{4} \nonumber\\
  &\quad+ \frac1\pi \int_{0}^{\pi/3} \log^2 \left(1-4\sin^2\frac\theta 2 \right)
  \log \left( 2\sin\frac\theta 2 \right) \md\theta,
\end{align}
and
\begin{equation}\label{eq:rintb}
  \frac2\pi \int_{0}^{\pi/2} \Re\Li_{2,1}\left(4\sin^2 \theta \right)\md \theta
  = \frac{1}{3}\zeta(3) - \frac{2\pi}{3}\mcl{2}.
\end{equation}

We may now establish --- from \eqref{eq:mu3f}, \eqref{eq:mu36},
\eqref{eq:mu36b}, \eqref{eq:rint}, \eqref{eq:rintb} and \eqref{eq:mu3} ---
that
\begin{align}
  \mu_3(1+x+y) &= \frac{43}{18}\,\zeta(3)-\frac{47\pi}{36}\,\mcl{2}-\frac{13}{3\pi}\,\mcl{4}  \nonumber \\
  &\quad+ \frac{2}\pi\int_{0}^{\pi/3} \log^2 \left(1-4\sin^2\frac\theta 2 \right)
  \log \left( 2\sin\frac\theta2 \right) \md\theta.
\end{align}
Hence, to prove \eqref{eq:mu3ls} we are reduced to verifying that
\begin{align}
  - \frac{1}\pi \Ls{4}{\frac{2\pi}3}
  &\stackrel{?[4]}{=} -\frac{37}{54}\zeta(3) + \frac{7\pi}{27}\mcl{2} - \frac7{9\pi}\mcl{4}  \nonumber\\
  &\quad+ \frac{1}{2\pi} \int_{0}^{\pi/3} \log^2 \left(1-4\sin^2\frac\theta 2 \right)
  \log \left( 2\sin\frac\theta 2 \right) \md\theta.
\end{align}
which completes the evaluation.
\end{proof}

\begin{remark}
  By noting that, for integers $n\ge2$,
  \begin{equation*}
    \Cl{n}{\frac\pi3} = \left( \frac{1}{2^{n-1}} + (-1)^n \right) \Cl{n}{\frac{2\pi}{3}},
  \end{equation*}
  the arguments of the Clausen functions in the evaluation \eqref{eq:mu3ls} of
  $\mu_3(1+x+y)$ may be transformed to $\frac{2\pi}{3}$.

  Many further variations are possible. For instance, it follows from \cite{logsin-evaluations} that
  \begin{align}
    \Ls{4}{\frac{2 \pi} 3}
    & = \frac{31}{18}\,\pi\zeta(3)+\frac{\pi^2}{12}\Cl{2}{\frac{2\pi}3}
    -\frac32\Cl{4}{\frac{2\pi}3}+6\Cl{2,1,1}{\frac{2\pi}3}
  \end{align}
  in terms of multi Clausen values.
  \qede
\end{remark}

\section{Proofs of two conjectures of Boyd}\label{sec:boyd}

We now use log-sine integrals to recapture the following evaluations
conjectured by Boyd in 1998 and first proven in \cite{two} using \emph{Bloch-Wigner}
logarithms.  Below ${\rm L}_{-n}$ denotes a primitive $\rm L$-series and $\G$
is Catalan's constant.

\begin{theorem}[Two quadratic evaluations]\label{thm:boyd} We have
 \begin{align}
    \mu(y^2(x+1)^2+y(x^2+6x+1)+(x+1)^2)
    &= \frac{16}{3\pi}\,{\rm L}_{-4}(2) = \frac{16}{3\pi}\,\G, \label{ex:bl1}\\
    \intertext{as well as}
    \mu(y^2(x+1)^2+y(x^2-10x+1)+(x+1)^2)
    &= \frac{5\sqrt{3}}{\pi}\,{\rm L}_{-3}(2) = \frac{20}{3\,\pi}\mcl{2}. \label{ex:bl2}
  \end{align}
\end{theorem}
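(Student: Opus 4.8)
The plan is to recognize that each of these two-variable polynomials is quadratic in $y$ and to integrate out the $y$-variable first using Jensen's formula \eqref{eq:sl}. Writing the polynomial as $A(x)y^2 + B(x)y + C(x)$ with $A=C=(x+1)^2$, I would factor it as $A(x)(y-y_1(x))(y-y_2(x))$ where $y_1,y_2$ are the roots in $y$. Since $A=C$, the product of the roots satisfies $y_1 y_2 = C/A = 1$, so the two roots are reciprocals of one another. Applying Jensen's formula in the $y$-integral (i.e. integrating over $|y|=1$), the contribution of a factor $y-y_j$ is $\log^+|y_j| = \log\max\{|y_j|,1\}$, and since $|y_1||y_2|=1$ exactly one of the two roots lies outside the unit disk. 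Thus the inner integral collapses to $\mu(\cdot) = \int_0^1 \log|A(e^{2\pi it})| \id t + \int_0^1 \log\max\{|y_1|,|y_2|\}\id t$, reducing the Mahler measure to a single integral over the $x$-circle.

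\textbf{Reducing to a log-sine integral.} The next step is to parametrize $x=e^{i\phi}$ and simplify the discriminant $B^2-4AC$ that governs the roots $y_{1,2} = \bigl(-B\pm\sqrt{B^2-4AC}\bigr)/(2A)$. With $A=C=(x+1)^2$, one finds $B^2 - 4AC$ factors nicely; for \eqref{ex:bl1} the middle term $x^2+6x+1$ and for \eqref{ex:bl2} the term $x^2-10x+1$ are chosen precisely so that the discriminant becomes a perfect-square-times-linear expression in trigonometric form. I expect $\log\max\{|y_1|,|y_2|\}$ to reduce, after using $|y_1y_2|=1$, to $\tfrac12\bigl|\log|y_1/y_2|\bigr| = \tfrac12\bigl|\Re\log\bigl((y_1-y_2)^2/\cdots\bigr)\bigr|$, expressible through $\log|2\sin(\theta/2)|$-type integrands once the half-angle substitution $\phi\mapsto\theta$ is made. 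The term $\int_0^1\log|(e^{2\pi it}+1)^2|\id t$ contributes $0$ by \eqref{eq:mlin}/Jensen. The target evaluations then emerge as $\Cl{2}{}$-values at $\pi/2$ (giving Catalan's constant $\G = \mathrm{L}_{-4}(2)$) and at $\pi/3$ (giving $\mcl{2} = \mathrm{L}_{-3}(2)$), matching the right-hand sides.

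\textbf{Main obstacle.} The genuinely delicate step is handling the absolute value $\max\{|y_1|,|y_2|\}$: determining, as $\phi$ ranges over $[0,2\pi)$, exactly which root leaves the unit disk and on which arcs the discriminant changes sign (real versus complex-conjugate roots). The locus where $|y_1|=|y_2|=1$ partitions the circle into arcs, and on each arc one must correctly select the outer root; getting these branch points right is what makes the reduction to a clean Clausen value work. I would determine these crossover angles explicitly from $B^2-4AC=0$ and verify that the resulting single-variable integral is, up to the half-angle change of variables, precisely $\tfrac{c}{\pi}\Cl{2}{\sigma}$ for the appropriate constant $c$ and angle $\sigma\in\{\pi/2,\pi/3\}$. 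Once the arcs and the selected root are pinned down, the remaining evaluation is a routine Clausen integral, and recognizing $\Cl{2}{\pi/2}=\G$ and $\Cl{2}{\pi/3}=\mcl{2}$ closes both identities.
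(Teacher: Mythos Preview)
Your strategy---exploit the palindromic quadratic structure, integrate out one variable via Jensen, then evaluate the remaining one-variable integral as a Clausen value---is precisely the paper's approach. But you have chosen the wrong variable, and this is not cosmetic.

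With $x=e^{i\phi}$ the quadratic in $y$ has $B/A=(c+\cos\phi)/(1+\cos\phi)$ (real, as you need), yet the discriminant $(B/A)^2-4$ works out to $(3\cos\phi+5)(1-\cos\phi)/(1+\cos\phi)^2$ for $c=3$ and $3(\cos\phi+7)(1-\cos\phi)/(1+\cos\phi)^2$ for $c=-5$. Neither is a perfect square, so $\log\max\{|y_1|,|y_2|\}$ retains a genuine surd and the single integral you are left with does not reduce to log-sine form. Your claim that the special middle coefficients make the $y$-discriminant a perfect square is simply false.

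The fix is to observe that $P_c$ is \emph{also} palindromic quadratic in $x$:
\[
P_c=(y^2+y+1)\,x^2+2(y^2+cy+1)\,x+(y^2+y+1).
\]
With $y=e^{i\psi}$ one gets $B'/A'=2(c+2\cos\psi)/(1+2\cos\psi)$, and now the discriminant is $4(c-1)(c+1+4\cos\psi)/(1+2\cos\psi)^2$. For $c=3$ this equals $64\cos^2(\psi/2)/(1+2\cos\psi)^2$ and for $c=-5$ it equals $192\sin^2(\psi/2)/(1+2\cos\psi)^2$---both perfect squares. The paper carries out exactly this $x$-integration (packaged as the formula $\int_0^1\log|2a+2b\cos(2\pi\theta)|\,\md\theta=\log\bigl(|a|+\sqrt{a^2-b^2}\bigr)$ for $|a|\ge|b|$, which is Jensen in disguise), after which the remaining integral is the routine Clausen evaluation you outline. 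So your plan works once you swap the roles of $x$ and $y$; the asymmetry between the two variables is the one substantive point you missed.
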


\begin{proof}
  Let $P_c = y^2(x+1)^2+y(x^2+2cx+1)+(x+1)^2$ and $\mu_c = \mu(P_c)$ for a real
  variable $c$.  We set $x=\e^{2\pi it}$, $y=\e^{2\pi iu}$ and note that
  \begin{align*}
    |P_c| &= |(x+1)^2(y^2+y+1)+2(c-1)xy| \\
    &= \left|(x+x^{-1}+2)(y+1+y^{-1})+2(c-1)\right| \\
    &= |2(\cos(2\pi t)+1)(2\cos(2\pi u)+1)+2(c-1)| \\
    &= 2\,|c+2\cos(2\pi u)+(1+2\cos(2 \pi u))\cos(2\pi t)|.
  \end{align*}
  It is known that (see \cite[\S4.224, Ex. 9]{gr}), for real $a, \,b$ with $|a|\ge |b|>0,$
  \begin{align}\label{eq:lab}
    \int_0^1 \log\left|2a+2b\cos(2\pi \theta)\right| \id\theta
    = \log\left( |a|+\sqrt{a^2-b^2}\right).
  \end{align}
  Applying this, with $a=c+2\cos(2\pi u)$ and $b=1+2\cos(2 \pi u))$ to $\int_0^1 |P_c| \id t,$ we get
  \begin{equation}\label{eq:mc}
    \mu_c = \int_0^1 \log\left|c+2\cos(2\pi u) + \sqrt{({c}^{2}-1)+4(c-1) \cos(2\pi u)}\right| \id u.
  \end{equation}
  If $c^2-1=\pm 4(c-1)$, that is if $c=3$ or $c=-5$, then the surd is a perfect square and also $|a|\ge |b|.$

  (a) When $c=3$ for (\ref{ex:bl1}), by symmetry, after factorization we obtain
  \begin{align*}
    \mu_3 &= \frac 1\pi \int_0^\pi \log(1+4|\cos \theta|+4|\cos^2 \theta|) \id\theta
    = \frac4\pi \int_0^{\pi/2} \log(1+2\cos\theta) \id\theta \\
    &= \frac4\pi \int_0^{\pi/2} \log\left(\frac{2\sin\frac{3\theta}2}{2\sin\frac{\theta}2}\right)\id\theta
    = \frac4{3\,\pi}\,\left(\Ls{2}{\frac{3\pi}2}-3\Ls{2}{\frac{\pi}2}\right) \\
    &= \frac{16}{3}\,\frac{{\rm L}_{-4}(2)}{\pi}
  \end{align*}
  as required, since $\Ls{2}{\frac{3\pi}2}=-\Ls{2}{\frac{\pi}2}= {\rm L}_{-4}(2)$, which is Catalan's constant $\G$.

  (b) When $c=-5$ for (\ref{ex:bl2}), we likewise obtain
  \begin{align*}
    \mu_{-5} &= \frac 2\pi \int_0^\pi \log\left(\sqrt 3+2\sin\theta\right) \id\theta
    = \frac2\pi \int_{\pi/3}^{4\pi/3} \log\left(\sqrt 3+2\sin\left(\theta-\frac\pi 3\right)\right) \id\theta \\
    &= \frac2\pi \int_{\pi/3}^{4\pi/3} \left\{\log2\left(\sin\frac\theta2\right)
    + \log2\left(\sin\frac{\theta+\frac\pi3}2\right)\right\} \id\theta \\
    &= \frac2\pi \int_{\pi/3}^{4\pi/3} \log2\left(\sin\frac\theta2\right) \id\theta
    + \frac2\pi \int_{2\pi/3}^{5\pi/3}\log2\left(\sin\frac\theta2\right) \id\theta \\
    &= \frac4\pi \mcl{2} - \frac4\pi \Cl{2}{\frac{4\pi}{3}}
    = \frac{20}{3\pi} \mcl{2},
  \end{align*}
  since $\Cl{2}{\frac{4\pi}{3}} = -\frac23 \mcl{2}$ and so we are done.
\end{proof}

When $c=1$ the cosine in the surd disappears, and we obtain $\mu_1=0$, which is
trivial as in this case the polynomial factorizes as $(1+x)^2(1+y+y^2)$. For
$c=-1$ we are able, with some care, to directly integrate \eqref{eq:mc}  and so
to obtain an apparently new Mahler measure:

\begin{theorem}We have
  \begin{align}
    \mu_{-1} &= \mu\left((x+1)^2(y^2+y+1)-2xy\right) \\
    &= \frac{1}{\pi}\,\left\{ \frac12 B\left( \frac14, \frac14 \right)
    \pFq32{\frac14,\frac14,1}{\frac34,\frac54}{\frac14}
    - \frac16 B\left( \frac34, \frac34 \right)
    \pFq32{\frac34,\frac34,1}{\frac54,\frac74}{\frac14}\right\}\nonumber.
  \end{align}
\end{theorem}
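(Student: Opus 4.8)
The plan is to start from the single-integral form \eqref{eq:mc} and carry out the remaining $u$-integration by series expansion. Setting $c=-1$ gives $c^2-1=0$ and $4(c-1)=-8$, so the radicand collapses to $-8\cos(2\pi u)$ and the integrand of \eqref{eq:mc} becomes $\log\bigl|{-1}+2\cos(2\pi u)+\sqrt{-8\cos(2\pi u)}\bigr|$. The one point needing care is that the passage to \eqref{eq:mc} via \eqref{eq:lab} presumes $|a|\ge|b|$ with $a=c+2\cos(2\pi u)$ and $b=1+2\cos(2\pi u)$; for $c=-1$ this fails precisely when $\cos(2\pi u)>0$, where instead the inner $t$-integral equals $\log|b|=\log(1+2\cos(2\pi u))$. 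Splitting the circle accordingly, and using $|a|+\sqrt{a^2-b^2}=(1+\sqrt{-2\cos(2\pi u)})^2$ on the complementary set $\cos(2\pi u)<0$, I would collapse everything onto $[0,\pi/2]$ by the evenness and periodicity of the cosine, arriving at the real integral
\[
  \mu_{-1}=\frac1\pi\int_0^{\pi/2}\log\Bigl[(1+2\cos\theta)\,(1+\sqrt{2\cos\theta})^2\Bigr]\id\theta .
\]

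Next I would put $s=\sqrt{2\cos\theta}$, so the integrand is $\log\bigl[(1+s^2)(1+s)^2\bigr]$, and split it into its even and odd parts in $s$. The even part is $\log|1-s^4|=\log|1+2\cos2\theta|$, and since $1+z+z^2$ has all its zeros on the unit circle its Mahler measure vanishes, whence $\frac1\pi\int_0^{\pi/2}\log|1+2\cos2\theta|\id\theta=\mu(1+z+z^2)=0$. This is the mechanism that removes the Catalan-constant contribution one would otherwise see in $\int_0^{\pi/2}\log(1+2\cos\theta)\id\theta$, and it explains why the final answer contains no $\G$. The odd part is $\log\bigl|\tfrac{1+s}{1-s}\bigr|$, so that
\[
  \mu_{-1}=\frac1\pi\int_0^{\pi/2}\log\left|\frac{1+\sqrt{2\cos\theta}}{1-\sqrt{2\cos\theta}}\right|\id\theta .
\]

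To evaluate this I would expand the remaining logarithm as an inverse-hyperbolic-tangent series, which forces a split at $\theta=\pi/3$ (where $s=1$): on $\theta>\pi/3$ one has $2\,\mathrm{arctanh}(s)=2\sum_{k\ge0}s^{2k+1}/(2k+1)$, while on $\theta<\pi/3$ one uses the reciprocal series $2\sum_{k\ge0}s^{-(2k+1)}/(2k+1)$. After the rationalizing substitution $s^2=2\sin\chi$ (which sends $s=1$ to $\chi=\pi/6$ and turns $\sqrt{4-s^4}$ into $2\cos\chi$), the two families reduce to half-integer moments of $\sin\chi$; summing each family over $k$ and invoking the Pochhammer identity $(5/4)_n=(4n+1)(1/4)_n$ together with the reflection $\Gamma(1/4)\Gamma(3/4)=\pi\sqrt2$ should assemble them into the two pieces $\tfrac1{2\pi}B(\tfrac14,\tfrac14)\,\pFq32{\frac14,\frac14,1}{\frac34,\frac54}{\frac14}$ and $-\tfrac1{6\pi}B(\tfrac34,\tfrac34)\,\pFq32{\frac34,\frac34,1}{\frac54,\frac74}{\frac14}$ claimed in the statement.

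The main obstacle is exactly this last bookkeeping: because the log-series converge only on part of $[0,\pi/2]$, the domain must be split at $s=1$ and the resulting half-integer moment integrals are individually incomplete; turning the two summed families into complete Beta functions each multiplying a single $\pFq32{\cdots}{\cdots}{\frac14}$, and pinning down the precise rational prefactors $\tfrac12$ and $-\tfrac16$ (as opposed to merely the hypergeometric shape), is where the ``some care'' promised in the text is spent, along with the routine justification of interchanging summation and integration across the integrable singularity at $\theta=\pi/3$.
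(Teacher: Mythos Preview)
The paper gives no proof of this theorem beyond the remark that one can ``with some care directly integrate \eqref{eq:mc}'' for $c=-1$, so there is no detailed argument to compare against. That said, your reduction is sound and goes well beyond what the paper records. Your handling of the failure of the hypothesis $|a|\ge|b|$ in \eqref{eq:lab} when $\cos(2\pi u)>0$ is correct: on that set the inner integral is $\log|b|=\log(1+2\cos\theta)$, while on the complementary set one indeed gets $\log\bigl((1+\sqrt{-2\cos\theta})^2\bigr)$. The fold onto $[0,\pi/2]$ and the identity
\[
  \mu_{-1}=\frac1\pi\int_0^{\pi/2}\log\!\left[(1+2\cos\theta)\bigl(1+\sqrt{2\cos\theta}\bigr)^2\right]\id\theta
\]
are correct, and the even/odd split in $s=\sqrt{2\cos\theta}$ is a genuinely nice move: the even part is exactly $\log|1+2\cos2\theta|$, whose integral is $\pi\,\mu(1+z+z^2)=0$, which both eliminates the Catalan constant and leaves
\[
  \mu_{-1}=\frac1\pi\int_0^{\pi/2}\log\!\left|\frac{1+\sqrt{2\cos\theta}}{1-\sqrt{2\cos\theta}}\right|\id\theta .
\]

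Where your proposal becomes genuinely incomplete is the last paragraph. After the $\operatorname{arctanh}$ expansion and the split at $\theta=\pi/3$, the building blocks are the \emph{incomplete} half-integer moments
\[
  \int_0^{\pi/6}(2\sin\chi)^{k+1/2}\id\chi
  \qquad\text{and}\qquad
  \int_{\pi/6}^{\pi/2}(2\sin\chi)^{-k-1/2}\id\chi ,
\]
and you assert that summing over $k$ ``should assemble'' these into complete Beta functions times the two $\pFq32{\cdot}{\cdot}{\frac14}$ series. That is the entire content of the theorem, and nothing in the sketch indicates \emph{how} the incomplete ranges recombine into complete Euler integrals; a priori each family produces an incomplete-beta tail, and it is not clear that the two tails cancel or telescope. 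The parameter pattern you note (the two $\,{}_3F_2$'s differing by a shift of $\tfrac12$, matching the parity of $k$) is suggestive, but it is not a substitute for actually carrying out the summation and identifying the constants $\tfrac12$ and $-\tfrac16$. As written, the argument stops at precisely the point where the ``some care'' of the paper is required; to turn this into a proof you need either an explicit evaluation of those two summed families, or an alternative representation (for instance, expanding $(4-s^4)^{-1/2}$ in the equivalent form $\mu_{-1}=\frac1\pi\int_0^{\sqrt2}\log\bigl|\tfrac{1+s}{1-s}\bigr|\,\tfrac{2s}{\sqrt{4-s^4}}\id s$ and integrating by parts termwise) that leads directly to the stated hypergeometric series.
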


We observe that an alternative form of $\mu_{-1}$ is given by
\[ \mu_{-1} = \mu\left(\left(x+1/x+2\sqrt{1/x}\right)(y+1/y+1)-2\right). \]

\begin{remark}
  Equation \eqref{eq:lab} may be applied to other conjectured Mahler measures.
  For instance, $\mu(1+x+y+1/x+1/y)=.25133043371325\ldots$ was
  conjectured by
  Deninger \cite{finch} to evaluate in $L$-series terms as
  \begin{equation}
    \mu(1+x+y+1/x+1/y) =15 \sum_{n=1}^\infty \frac{a_n}{n^2},
  \end{equation}
  where $ \sum_{n=1}^\infty a_n q^n = \eta(q) \eta(q^3) \eta(q^5) \eta(q^{15}).$
  Here $\eta$ is the \emph{Dirichlet eta-function}:
  \begin{equation}\label{eq:eta}
    \eta(q):=q^{1/24}\, \prod_{n=1}^\infty(1-q^n)
    = q^{1/24}\,\sum_{n=-\infty}^\infty (-1)^n q^{n(3n+1)/2}.
  \end{equation}
  This has very recently been proven in \cite{rz}.

    Application of \eqref{eq:lab} shows that
  \begin{equation*}
    \mu(1+x+y+1/x+1/y) =
    \frac1\pi\, \int_0^{\pi/3}
     \log\left(\frac{1 + 2\cos\theta}2 + \sqrt{\left(\frac{1+2\cos\theta}2\right)^2 - 1}
    \right) \id\theta,
  \end{equation*}
  but the surd remains an obstacle to a direct evaluation.
  \qed
\end{remark}

\section{Conclusion}

To recapitulate, $\mu_k(1+x+y)=W_3^{(k)}(0)$ has been evaluated in terms of
log-sine integrals for $1 \le k \le 3$. Namely,
\begin{align}\label{eq:mu2A2}
  \mu_1(1+x+y) &= \frac{3}{2\pi} \Ls{2}{\frac{2\pi}{3}}, \\
  \mu_2(1+x+y) &= \frac3\pi \Ls{3}{\frac{2\pi}{3}} + \frac{\pi^2}{4}, \\
  \mu_3(1+x+y) &\stackrel{?[1]}{=} \frac{6}{\pi}\Ls{4}{\frac{2\pi}{3}} - \frac9\pi\mcl{4}
  - \frac\pi4\mcl{2} - \frac{13}{2}\zeta(3).
\end{align}
Hence it is reasonable to ask whether $\mu_4(1+x+y)$ and higher Mahler measures
have evaluations in similar terms.

\begin{example}[Evaluation of $\mu_4(1+x+y)$]\label{ex:mu4}
  This question is taken up in \cite{logsin4} where it is found that
  \begin{align}\label{eq:mu4}
    \pi \mu_4(1+x+y) &\stackrel{?[5]}{=} 12 \Ls{5}{\frac{2\pi}{3}} -
    \frac{49}{3} \Ls{5}{\frac\pi3} + 81\Gl{4,1}{\frac{2\pi}{3}} \\
  &\quad+ 3\pi^2 \Gl{2,1}{\frac{2\pi}{3}}+ 2\zeta(3)\Cl{2}{\frac\pi3} +
   \pi\Cl{2}{\frac\pi3}^2 - \frac{29}{90}\pi^5. \nonumber
  \end{align}
  in terms of generalized Glaisher and Clausen values.
  \qede
\end{example}

We close with numerical values for these quantities.

\begin{example}
  \label{ex:num}
  By computing higher-order finite differences in the right-hand side of
  \eqref{eq:hyper} we have obtained values for $\mu_n(1+x+y)$ to several
  thousand digits.  To confirm these values we have evaluated the
  double-integral \eqref{eq:mu-trigintegral} to about $250$ digits for all
  $n\le8$. These are the results for $\mu_k:=\mu_k(1+x+y)$ to fifty digits:
  \begin{align}
    \mu_2 &=  0.41929927830117445534618570174886146566170299117521,\\
    \mu_3 &=  0.13072798584098927059592540295887788768895327503289,\\
    \mu_4 &=  0.52153569858138778267996782141801173128244973155094,\\
    \mu_5 &= -0.46811264825699083401802243892432823881642492433794.
  \end{align}
  These values will allow a reader to confirm many of our results numerically.
  \qede
\end{example}

\paragraph{Acknowledgements}
We thank David Bailey for his assistance with the two-dimensional quadratures
in Example \ref{ex:num}.  Thanks are due to Yasuo Ohno and Yoshitaka Sasaki for
introducing us to the relevant papers during their recent visit to CARMA.

\bibliography{logsin-refs}

\begin{thebibliography}{10}

\bibitem{bbg}
J.~M. Borwein, D.~H. Bailey, and R.~Girgensohn.
\newblock {\em Experimentation in Mathematics: Computational Paths to
  Discovery}.
\newblock A. K. Peters, 1st edition, 2004.

\bibitem{logsin4}
J.~M. Borwein, D.~H. Bailey, and A.~Straub.
\newblock Log-sine evaluations of {M}ahler measures, {P}art {III}.
\newblock In preparation, March 2011.

\bibitem{goldbach}
J.~M. Borwein and D.~M. Bradley.
\newblock Thirty two {G}oldbach variations.
\newblock {\em Int. J. Number Theory}, 2(1):65--103, 2006.

\bibitem{b3l}
J.~M. Borwein, D.~M. Bradley, D.~J. Broadhurst, and P.~Lison{\v e}k.
\newblock Special values of multidimensional polylogarithms.
\newblock {\em Trans. Amer. Math. Soc.}, 353(3):907--941, 2001.

\bibitem{mcv}
J.~M. Borwein, D.~J. Broadhurst, and J.~Kamnitzer.
\newblock Central binomial sums, multiple {C}lausen values, and zeta values.
\newblock {\em Experimental Mathematics}, 10(1):25--34, 2001.

\bibitem{closed}
J.~M. Borwein and R.~E. Crandall.
\newblock Closed forms: what they are and why we care.
\newblock {\em Notices Amer. Math. Soc.}, 2010.
\newblock In press, available at
  \url{http://www.carma.newcastle.edu.au/~jb616/closed-form.pdf}.

\bibitem{logsin1}
J.~M. Borwein and A.~Straub.
\newblock Log-sine evaluations of {M}ahler measures.
\newblock {\em J. Aust Math. Soc.}, Mar 2011.
\newblock Accepted. Available at
  \url{http://carma.newcastle.edu.au/jon/logsin.pdf}.

\bibitem{logsin-evaluations}
J.~M. Borwein and A.~Straub.
\newblock Special values of generalized log-sine integrals.
\newblock {\em Proceedings of ISSAC 2011 (International Symposium on Symbolic
  and Algebraic Computation)}, March 2011.
\newblock Accepted. Available at
  \url{http://carma.newcastle.edu.au/jon/logsin3.pdf}.

\bibitem{bswz-densities}
J.~M. Borwein, A.~Straub, J.~Wan, and W.~Zudilin.
\newblock Densities of short uniform random walks.
\newblock Submitted, available at:
  \url{http://www.carma.newcastle.edu.au/~jb616/densities.pdf}, 2010.

\bibitem{boyd}
D.~W. Boyd.
\newblock Speculations concerning the range of {M}ahler's measure.
\newblock {\em Canad. Math. Bull.}, 24:453--469, 1981.

\bibitem{dk1-eps}
A.~Davydychev and M.~Kalmykov.
\newblock {New results for the {$\varepsilon$}-expansion of certain one-, two-
  and three-loop {F}eynman diagrams}.
\newblock {\em Nuclear Physics B}, 605:266--318, 2001.

\bibitem{davydychev-bin}
A.~Davydychev and M.~Kalmykov.
\newblock Massive {F}eynman diagrams and inverse binomial sums.
\newblock {\em Nuclear Physics B}, 699(1-2):3--64, 2004.

\bibitem{davydychev-eps}
A.~I. Davydychev.
\newblock Explicit results for all orders of the $\varepsilon{}$ expansion of
  certain massive and massless diagrams.
\newblock {\em Phys. Rev. D}, 61(8):087701, Mar 2000.

\bibitem{finch}
S.~Finch.
\newblock Modular forms on {$SL_2(\mathbb{Z})$}.
\newblock Available online at \url{http://algo.inria.fr/csolve/frs.pdf}, 2005.

\bibitem{gr}
I.~Gradstein and I.~Ryszik.
\newblock {\em The Tables of Integrals, Sums and Products}.
\newblock Academic Press, 1980.

\bibitem{kalmykov-eps}
M.~Kalmykov.
\newblock {About higher order {$\epsilon$}-expansion of some massive two- and
  three-loop master-integrals}.
\newblock {\em Nuclear Physics B}, 718:276--292, July 2005.

\bibitem{lsjk}
M.~Kalmykov and A.~Sheplyakov.
\newblock {lsjk - a C++ library for arbitrary-precision numeric evaluation of
  the generalized log-sine functions}.
\newblock {\em Comput. Phys. Commun.}, 172(1):45--59, 2005.

\bibitem{klo}
N.~Kurokawa, M.~Lal{\'i}n, and H.~Ochiai.
\newblock Higher {M}ahler measures and zeta functions.
\newblock {\em Acta Arithmetica}, 135(3):269--297, 2008.

\bibitem{lewin}
L.~Lewin.
\newblock On the evaluation of log-sine integrals.
\newblock {\em The Mathematical Gazette}, 42:125--128, 1958.

\bibitem{lewin2}
L.~Lewin.
\newblock {\em Polylogarithms and associated functions}.
\newblock North Holland, 1981.

\bibitem{NIST}
F.~W.~J. Olver, D.~W. Lozier, R.~F. Boisvert, and C.~W. Clark.
\newblock {\em NIST Handbook of Mathematical Functions}.
\newblock Cambridge University Press, 2010.

\bibitem{ona11}
K.~Onadera.
\newblock Generalized log sine integrals and the {M}ordell-{T}ornheim zeta
  values.
\newblock {\em Trans. Amer. Math. Soc.}, 363(3):1463--1485, 2011.

\bibitem{rz}
M.~Rogers and W.~Zudilin.
\newblock On the {M}ahler measure of $1+x+1/x+y+1/y$.
\newblock {\em Preprint}, Feb 2011.

\bibitem{two}
N.~Touafek.
\newblock Mahler's measure: proof of two conjectured formulae.
\newblock {\em An. St. Univ. Ovidius Constanta}, 16:127--136, 2008.

\bibitem{zlobin}
S.~Zlobin.
\newblock {Special values of generalized polylogarithms}.
\newblock To appear in proceedings of the conference "Diophantine and Analytic
  Problems in Number Theory", 2007.

\end{thebibliography}
\bibliographystyle{abbrv}

\end{document}